\newcommand{\argmin}{\mathrm{argmin}}
\newcommand{\autobound}{AutoBound\xspace}
\newcommand{\eqdef}{\triangleq}
\newcommand{\ignore}[1]{}
\newcommand{\inner}[2]{ \ensuremath { \left\langle #1, #2 \right\rangle  } }
\newcommand{\integers}{\mathbb{Z}}
\newcommand{\mat}[1]{\mathbf{#1}}
\newcommand{\norm}[1]{\left\|#1\right\|}
\newcommand{\paren}[1]{\left(#1\right)}
\newcommand{\reals}{\mathbb{R}}
\newcommand{\set}[1]{\left\{#1\right\}}
\newcommand{\tee}{\mathsf{\tiny T}}
\renewcommand{\vec}[1]{\mathbf{#1}}
\newtheorem{definition}{Definition}
\newtheorem{lemma}{Lemma}
\newtheorem{proposition}{Proposition}
\newtheorem{theorem}{Theorem}
\newtheorem*{lemma*}{Lemma}
\newtheorem*{theorem*}{Theorem}
\newcommand{\ones}{\mathbf{1}}
\newcommand{\veta}{\boldsymbol{\eta}}
\renewcommand{\a}{\vec{a}}
\renewcommand{\b}{\vec{b}}
\newcommand{\lep}[1]{\underline{#1}}
\newcommand{\rep}[1]{\overline{#1}}
\newcommand{\zeros}{\mathbf{0}}
\newcommand{\safecombination}{\ref{alg:safecombination}\xspace}
\newcommand{\saferate}{\ref{alg:saferate}\xspace}
\newcommand{\mA}{\mat{A}}
\newcommand{\mB}{\mat{B}}
\newcommand{\mU}{\mat{U}}
\newcommand{\mW}{\mat{W}}
\newcommand{\mX}{\mat{X}}
\newcommand{\mZ}{\mat{Z}}
\newcommand{\sA}{\mathcal{A}}
\newcommand{\sI}{\mathcal{I}}
\newcommand{\va}{\vec{a}}
\newcommand{\vb}{\vec{b}}
\newcommand{\ve}{\vec{e}}
\newcommand{\vg}{\vec{g}}
\newcommand{\vp}{\vec{p}}
\newcommand{\vr}{\vec{r}}
\newcommand{\vu}{\vec{u}}
\newcommand{\vv}{\vec{v}}
\newcommand{\vx}{\vec{x}}
\newcommand{\vy}{\vec{y}}
\newcommand{\vz}{\vec{z}}
\newenvironment{varalgorithm}[1]
  {\algorithm}
  {\endalgorithm}
\newenvironment{customthm}[1]
  {\innercustomthm}
  {\endinnercustomthm}
\newcommand{\algrule}[1][.2pt]{\par\vskip.2\baselineskip\hrule height #1\par\vskip.5\baselineskip}
\title{Universal Majorization-Minimization Algorithms\footnote{An earlier version of this work was published as chapter 5 of \url{https://arxiv.org/pdf/2212.11429v1.pdf}.}}
\author{Matthew Streeter\\
\vspace{-.2cm} \\
\small{{\tt \href{mailto:mstreeter@google.com}{mstreeter@google.com}}}}
\date{}
\begin{document}

\maketitle

\begin{abstract}
Majorization-minimization (MM) is a family of optimization methods that iteratively reduce a loss by minimizing a locally-tight upper bound, called a majorizer.  Traditionally, majorizers were derived by hand, and MM was only applicable to a small number of well-studied problems.
We present optimizers that instead derive majorizers \emph{automatically}, using a recent generalization of Taylor mode automatic differentiation.
These \emph{universal} MM optimizers can be applied to arbitrary problems and converge from any starting point, with no hyperparameter tuning.
\end{abstract}

\section{Introduction}

Optimization plays a central role in machine learning and statistics.  To fit a model, one typically minimizes a loss function $f: \reals^n \to \reals$, where $f$ is available in symbolic form, allowing its derivatives to be computed easily via automatic differentiation.

The optimizers used to fit models to data typically fall into one of two categories:
\begin{enumerate}
  \item \emph{General-purpose optimizers based on Taylor polynomials.}  This category includes gradient descent, Newton's method, and many other optimizers.
  \item \emph{Majorization-minimization (MM) optimizers.}  These methods are based on \emph {majorizers} (locally-tight upper bounds), which have been derived by hand for various loss functions of interest.
\end{enumerate}
MM optimizers are attractive because they converge from any starting point, do not require hyperparameter tuning, and sometimes offer faster rates of convergence than general-purpose methods.  
MM optimizers have been derived for logistic regression \cite{bohning1988monotonicity}, quantile regression \cite{hunter2000quantile}, support vector machines \cite{groenen2008svm}, and many other problems \cite{de2016block,lange2016mm}.
However, because each new problem requires a non-trivial derivation, MM has not been applicable to more complex loss functions, such as the ones used in deep learning.

In this paper, we will derive the upper bounds used in MM optimization algorithms \emph{automatically}, thereby creating general-purpose MM optimizers that are as widely applicable as gradient-based methods.
We thereby greatly extend the reach of this decades-old optimization template, making its benefits much more broadly available.
Furthermore, we will see that the resulting optimizers perform well on a variety of problems.

To automatically derive majorizers, we use a recently-developed algorithm \cite{streeter2023automatically} that automatically bounds the Taylor remainder series, using an interval arithmetic variant of Taylor mode automatic differentiation.  Computing a majorizer this way requires an additional forward pass that uses memory linear in the number of inputs.  For this reason, we do not majorize the original loss function (which would require too much memory), but instead seek a majorizer that is valid over a lower-dimensional subspace.

Our first universal MM algorithm, called \saferate, uses a one-dimensional upper bound to derive a learning rate that is guaranteed to monotonically reduce the loss.  This learning rate depends on the current iterate $\vx_t$, and therefore adapts during the course of optimization.  Our second algorithm, \safecombination, generalizes this by computing a linear combination of $d$ update directions rather than a single learning rate.

A possible concern with any such approach is that our automatically-derived upper bounds may be too loose, especially for complex losses.
Reassuringly, we find that the majorizers used by \saferate are tight in practice, even for multi-layer perceptrons with up to 64 hidden layers.

Although our universal MM optimizers are significant in their own right, we hope their biggest impact will be as illustrations of an underappreciated fact:
\begin{quote}
\emph{A general-purpose optimizer can automatically compute polynomial upper and lower bounds on the loss, rather than simply approximate it with a Taylor polynomial.}
\end{quote}

\section{Background}

We begin with a review of MM optimization algorithms, followed by a discussion of recently-developed techniques for bounding the Taylor remainder series.

\subsection{MM Optimization Algorithms}

Majorization-minimization (MM) is a well-known optimization technique based on a simple idea: by minimizing a locally-tight upper bound on a loss function of interest, we can iteratively reduce the loss.  To present the method formally, we will use the following definition.

\begin{definition}[Majorization]
Consider a function $f: \reals^n \to \reals$.  A function $\bar f: \reals^n \times \reals^n \to \reals$ \emph{majorizes} $f$ if, for any $\vy \in \reals^n$:
\begin{equation}
  f(\vx) \le \bar f(\vx, \vy) \quad \forall \vx \in \reals^n
\end{equation}
and furthermore, $\bar f(\vy, \vy) = f(\vy)$.
\end{definition}

If $\bar f$ majorizes $f$, then $\bar f$ provides an upper bound on $f$ that is tight at a specified point $\vy$, and valid everywhere.  Figure~\ref{fig:a_majorizer} illustrates a majorizer for the $\mathrm{softplus}$ function.

\begin{figure}[h]
\begin{center}
\includegraphics[width=0.4\linewidth]{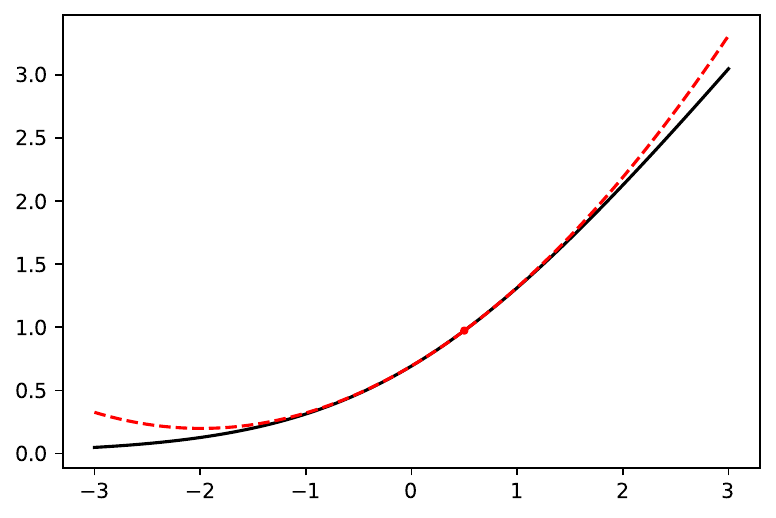}
\caption{The function $f(x) = \log(1+\exp(x))$, and the quadratic majorizer $\bar f(x, y) =f(y) + \nabla f(y)(x-y) + \frac 1 8 (x - y)^2$, evaluated at $y = \frac 1 2$.}
\label{fig:a_majorizer}
\end{center}
\end{figure}

Given an arbitrary starting point $\vx_0 \in \reals^n$, a majorization-minimization algorithm produces a sequence of iterates $\vx_0, \vx_1, \vx_2, \ldots$ by setting
\begin{equation} \label{eq:mm_argmin}
  \vx_{t+1} = \argmin_{\vx \in \reals^n} \set { \bar f(\vx, \vx_t) }.
\end{equation}
An appealing property of MM is that the loss decreases monotonically: $f(\vx_{t+1}) \le f(\vx_t)$ for all $t$.  This follows from the sequence of inequalities:
\begin{equation}
  f(\vx_{t+1}) \le \bar f(\vx_{t+1}, \vx_t) \le \bar f(\vx_t, \vx_t) = f(\vx_t)
\end{equation}
where the first inequality holds because $\bar f$ majorizes $f$, and the second inequality holds by definition of $\vx_{t+1}$.

Majorization-minimization is the subject of a large literature, and many different majorizers have been derived for functions with specific properties.  For example, if one can show that $f$ is $\beta$-smooth (meaning that $\nabla f$ has a Lipschitz constant at most $\beta$), then it can be shown that $f$ is majorized by the quadratic function $\bar f(\vx, \vy) = f(\vy) + \nabla f(\vy) (\vx - \vy) + \frac \beta 2 \norm{\vx - \vy}_2^2$.  However, we are not aware any previous work on automatically deriving majorizers for arbitrary losses.  See \cite{de2016block,lange2016mm} for textbook treatments of MM.

\subsection{Bounding the Taylor Remainder Series} \label{sec:bounding_remainder}

We now briefly review two methods for bounding the Taylor remainder series.
For simplicity, consider a scalar function $f: \reals \to \reals$ (all results in this section generalize to vector-variate functions).  Given a polynomial degree $k$, a \emph{trust region} $[a, b]$, and a scalar $x_0 \in [a, b]$, these methods compute an interval $I$ such that for all $x \in [a, b]$,
\begin{equation} \label{eq:desired_bound}
  f(x) \in \underbrace{ \paren {\sum_{i=0}^{k-1}  \frac {1} {i!} f^{(i)}(x_0) (x - x_0)^i } }_{\text{Degree $k-1$ Taylor polynomial}} + \underbrace{I (x - x_0)^k}_{\text{Remainder bound}}.
  \footnote{The product of an interval $I = [\lep{I}, \rep{I}]$, and a scalar $z$ is defined as $I z \eqdef \set{\alpha z: \alpha \in I} = [ \min \set { \lep{I} z, \rep{I} z} , \max \set { \lep{I} z, \rep{I} z} ]$.}
\end{equation}

A classical method for deriving the interval $I$ is based on the fact that \eqref{eq:desired_bound} holds so long as
\begin{equation} \label{eq:lagrange_remainder_interval}
  I \supseteq \left [\inf_{y \in [a, b]} \set { \frac { f^{(k)}(y) } { (k+1)! } }, \sup_{y \in [a, b]} \set { \frac { f^{(k)}(y) } { (k+1)! } } \right ].
\end{equation}
That \eqref{eq:lagrange_remainder_interval} implies \eqref{eq:desired_bound} can be shown using the Lagrange form of the Taylor remainder series and the mean value theorem \cite{apostol1991calculus}.
Furthermore, an interval that satisfies \eqref{eq:lagrange_remainder_interval} can be computed by first deriving an expression for $f^{(k)}(y)$ (e.g., using automatic differentiation), and then evaluating this expression using interval arithmetic \cite{jaulin2001interval,hansen1979global,hansen2003global,moore1966interval}.

Unfortunately, the bounds produced by direct interval arithmetic evaluation of the $k$th derivative can be very loose.
To address this, we created the AutoBound algorithm \cite{streeter2023automatically}, which uses an interval arithmetic variant of Taylor-mode automatic differentiation to compute bounds that can be much tighter.

\begin{figure}[h]
\begin{center}
\includegraphics[width=0.5\linewidth]{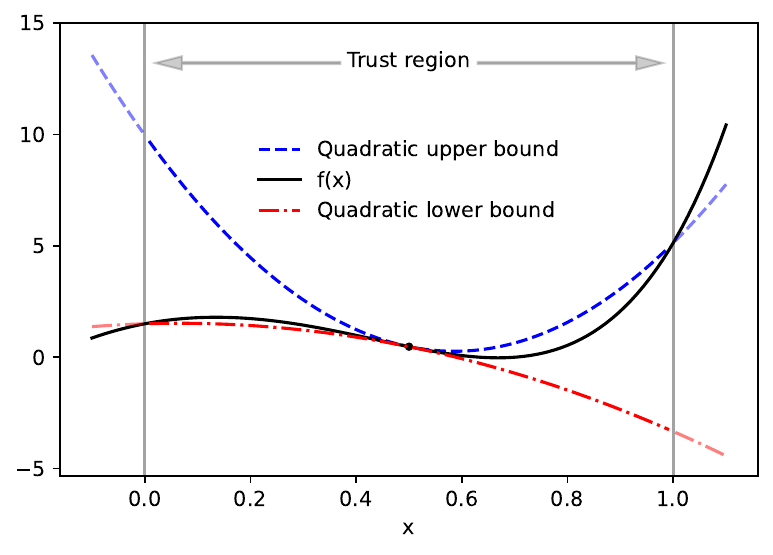}
\caption{Quadratic upper and lower bounds derived by AutoBound \cite{streeter2023automatically} for the function
 $f(x) = \frac 3 2 \exp(3 x) - 25 x^2$, centered at $x_0 = \frac 1 2$, and valid
 over the interval $[0, 1]$.}
\label{fig:example_enclosure}
\end{center}
\end{figure}

Figure~\ref{fig:example_enclosure} gives an example of upper and lower bounds computed by AutoBound when $k = 2$.

\section{Universal MM Optimization Algorithms}

We now present two \emph{universal} MM optimization algorithms: MM optimization algorithms that can be applied to any loss that can be written as a composition of known elementary univariate functions (e.g., $\exp$, $\log$, $\mathrm{relu}$, $\mathrm{softplus}$), plus the binary operations of addition and multiplication.  The term ``universal'' is ours, and to our knowledge these algorithms are the first MM algorithms that are universal in this sense.

In fact, our first algorithm is a special case of our second one, but we choose to present it separately for pedagogical purposes.  Specifically:
\begin{itemize}
  \item The \saferate algorithm uses \autobound to compute a learning rate $\eta_t$ that is guaranteed to reduce the loss on step $t$.
  \item The \safecombination algorithm uses \autobound to compute a vector $\veta_t$ that specifies a combination of user-supplied update directions that is guaranteed to reduce the loss.
\end{itemize}

In both cases, the majorizer is defined over a low-dimensional subspace of the original optimization domain, making the algorithms \emph{majorization-minimization subspace methods}.  In addition, the value of the majorizer will be infinite outside a trust region, whose size affects the tightness of the majorizer.  In order to speed up convergence, both algorithms use a simple heuristic to adapt the size of the trust region over time.

\subsection{Automatically Deriving Majorizers} \label{sec:deriving_majorizers}

In \S\ref{sec:bounding_remainder}, we reviewed two techniques for bounding the Taylor remainder series.  When used to compute a degree $k$ bound for a univariate function $f: \reals \to \reals$, these techniques take as input a trust region $[a, b] \subseteq \reals$ and a point $x_0 \in [a, b]$, and return an interval $I$
such that
\begin{equation} \label{eq:autobound_bound}
  f(x) \in T_{k-1}(x; x_0) + I (x - x_0)^k \quad \forall x \in [a, b]
\end{equation}
where $T_{k-1}(x; x_0) \eqdef \sum_{i=0}^{k-1}  \frac {f^{(i)}(x_0)} {i!} (x - x_0)^i$ is the degree $k-1$ Taylor polynomial of $f$ at $x_0$.

In what follows, we will always have $(x - x_0)^k \ge 0$ for $x \in [a, b]$.  Under this assumption, \eqref{eq:autobound_bound} implies the upper bound
\begin{equation}
  f(x) \le T_{k-1}(x; x_0) + \rep{I} (x - x_0)^k \quad \forall x \in [a, b]
\end{equation}
where $\rep{I}$ is the right end point of $I$.

We can therefore define a majorizer
\begin{equation}
\bar f(x, x_0) \eqdef 
\begin{cases}
  T_{k-1}(x; x_0) + \rep{I} (x - x_0)^k & x \in [a, b] \\
  \infty & x \notin [a, b].
\end{cases}
\end{equation}

For a vector-variate function $f: \reals^n \to \reals$, the situation is similar, except that the scalar product $\rep{I} (x - x_0)^k$ is replaced by an inner product of two tensors with $n^k$ elements (see Appendix~\ref{sec:implementation_details} for details).

\subsection{A Direct Approach} \label{sec:direct_approach}

Given the results from the previous section, the most obvious way to define a universal MM optimizer would be to auto-derive a majorizer for the loss function $f: \reals^n \to \reals$.  This approach can be made to work, and is practical when $n$ is small.

The downside of this approach is that the memory required by AutoBound \cite{streeter2023automatically} scales linearly with $n$, making the direct approach impractical for large-scale optimization problems.  We therefore focus on majorizing $f$ over a lower-dimensional subspace.

\subsection{SafeRate: Determining a Safe Learning Rate} \label{sec:saferate}

\newcommand{\etamax}{\bar \eta}

Iterative methods for optimization, such as gradient descent and Newton's method, optimize a differentiable loss $f: \reals^n \to \reals$ by performing a sequence of updates of the form
\begin{equation}
  \vx_{t+1} = \vx_t + \eta_t \vv_t
\end{equation}
where $\vv_t$ is an update direction and $\eta_t$ is a step size.  To optimize $f$ effectively, we would like to choose $\vv_t$ and $\eta_t$ so as to guarantee convergence to a first-order critical point (or preferably, a local minimum).

Optimization theory provides many methods for choosing $\vv_t$ and $\eta_t$, which have different theoretical guarantees depending on what assumptions are made about $f$.
If $f$ is convex and the gradient norm is bounded, then setting $\vv_t = -\nabla f(\vx_t)$ and $\eta_t = \frac{1}{\sqrt t}$ guarantees that the loss of the average iterate converges to the global minimum \cite{zinkevich2003online}.
If $f$ is non-convex but its gradient has a Lipschitz constant of at most $\beta$ (meaning $f$ is ``$\beta$-smooth''), 
then setting $\vv_t = - \nabla f(\vx_t)$ and $\eta_t = \frac 1 \beta$ guarantees that the loss goes down monotonically \cite{ghadimi2013stochastic}.  See the textbooks \cite{nesterov2003introductory,nocedal1999numerical} for an introduction to optimization theory, including classical results on the rates of convergence achieved by different algorithms under various assumptions.

The loss functions used to train neural networks are neither convex nor $\beta$-smooth.  For such functions, the only globally convergent optimization methods we are aware of involve are classical methods that require line search or trust region adaptation (e.g., \cite[chapters 3-4]{nocedal1999numerical}).  Because these classical approaches are based on Taylor polynomials, and have no way of knowing how accurate the Taylor polynomial approximation is at a given $\vx_t$, they require trial and error in order to find a point that reduces the loss.

Using our new machinery, it is possible to find a learning rate that is \emph{guaranteed} to reduce the loss, at the cost of just one additional forward pass.  To do so, we use \autobound to derive a majorizer of the function
\begin{equation} \label{eq:next_loss}
  h_t(\eta) = f(\vx_t + \eta \vv_t).
\end{equation}

Specifically, given a specified maximum learning rate $\etamax_t$, we auto-derive a majorizer for $h_t$ as described in \S\ref{sec:deriving_majorizers} to obtain a polynomial $P_t$ such that
\begin{equation} \label{eq:next_loss_bound}
   h_t(\eta) \le P_t(\eta) \quad \forall \eta \in [0, \etamax_t].
\end{equation}
Note that $P_t$ defines a majorizer of $f$ that is valid for $\vx \in \set{\vx_t + \eta \vv_t: \eta \in [0, \etamax_t]}$.

We then choose $\eta_t$ to minimize the majorizer, setting
\begin{equation} \label{eq:eta}
  \eta_t = \argmin_{\eta \in [0, \etamax_t]} \set { P_t(\eta) }.
\end{equation}

The $\argmin$ can be found efficiently by computing the roots of a degree $k-1$ polynomial (see Appendix~\ref{sec:implementation_details}).

How do we choose the maximum learning rate, $\etamax_t$?  Because $\eta_t \le \etamax_t$, choosing $\etamax_t$ too small will slow down progress.  On the other hand, as $\etamax_t$ increases, the majorizer $P_t$ becomes looser, so choosing $\etamax_t$ very large will \emph{also} result in a small value of $\eta_t$.

A simple rule, which we will find to be effective in our experiments in \S\ref{sec:mm_experiments}, is to set $\etamax_1$ to an arbitrary value (say, 1), and then to set
\begin{equation} \label{eq:adaptive_eta}
\etamax_{t+1} = \begin{cases}
2 \etamax_t & \mbox { if } \eta_t \ge \frac 1 2 \etamax_t \\
\frac 1 2 \etamax_t & \mbox { otherwise.}
\end{cases}
\end{equation}

We refer to the resulting algorithm as \saferate, and give pseudocode below.

\newcommand{\doracle}{\mathcal{O}}
\newcommand{\states}{\mathcal{S}}

\begin{varalgorithm}{SafeRate}
  \begin{algorithmic}
  \caption{} \label{alg:saferate}
  \STATE {\bf Parameters}:
  \begin{enumerate}
  	\item a function $\doracle$ that, given the current iterate $\vx_t$ and observed gradients $\vg_1, \vg_2, \ldots, \vg_t$, returns an update direction.
	\item a function $\mathrm{PolynomialUpperBound}$ that, given a function $h: \reals \to \reals$, an interval $[a, b]$, and a point $z_0 \in [a, b]$, returns a degree $k$ polynomial $P$ such that $h(z) \le P(z)\ \forall z \in [a, b]$, and $P(z_0) = h(z_0)$. 
  \end{enumerate}
  \algrule
  \STATE {\bf Input}: a loss $f: \reals^n \to \reals$, and an initial point $\vx_1 \in \reals^n$.
  \STATE {\bf Output}: a point $\vx_{T+1} \in \reals^n$.
  \algrule
   \STATE Initialize $\etamax_1 \leftarrow 1$.
   \FOR {$t$ from $1$ to $T$}
    \STATE Set $\vg_t \leftarrow \nabla f(\vx_t)$.
    \STATE Set $\vv_t \leftarrow \doracle(\vx_t, \set{\vg_s}_{s=1}^t)$.
    \STATE Define $h_t(\eta) \eqdef f(\vx_t + \eta \vv_t)$.
    \STATE Set $P_t \leftarrow \mathrm{PolynomialUpperBound}(h_t, [0, \etamax_t], 0)$.
    \STATE Set $\eta_t \leftarrow \argmin_{\eta \in [0, \etamax_t]} \set { P_t(\eta) }$.
    \STATE Set $\vx_{t+1} \leftarrow \vx_t + \eta_t \vv_t$.
    \STATE Set $\etamax_{t+1} \leftarrow \begin{cases}
2 \etamax_t & \mbox { if } \eta_t \ge \frac 1 2 \etamax_t \\
\frac 1 2 \etamax_t & \mbox { otherwise.}
\end{cases}$
  \ENDFOR
  \STATE Return $\vx_{T+1}$.
\end{algorithmic}
\end{varalgorithm}

A few remarks about \saferate are in order:
\begin{enumerate}
  \item As written, \saferate calls \autobound once per iteration, and passes it a symbolic expression that depends on $\vx_t$ and $\vv_t$.  However, in an actual implementation, \autobound is called once up front to return an object (e.g., a TensorFlow tensor) that works for arbitrary $\vx_t$ and $\vv_t$.  Thus, the overhead of recursively analyzing the symbolic expression for the loss $f$ is incurred once up front, rather than $T$ times.
  \item The use of a finite trust region is not necessary for all problems.  In particular, for linear or logistic regression, \autobound is able to compute a quadratic majorizer that is valid over an unbounded trust region.  In such cases we may set $\etamax_1 = \infty$ to effectively disable the use of a finite trust region.
\end{enumerate}

\subsection{SafeCombination: Combining Multiple Update Directions} \label{sec:safe_combination}

\newcommand{\vetamax}{\bar \veta}

The method just discussed for determining a safe learning rate is a special case of a more general method, where a number of possible update directions are provided, and \autobound is used to determine a linear combination of the update directions that is guaranteed to monotonically reduce the loss.

Concretely, let $\mU_t \in \reals^{n \times d}$ be a matrix whose columns specify $d$ possible update directions (e.g., the negative gradient, or the update direction used by Adam or AdaGrad).  We will perform an update of the form:
\begin{equation}
  \vx_{t+1} = \vx_t + \mU_t \veta_t
\end{equation}
for some $\veta_t \in [\vec{0}, \vetamax_t] \subseteq \reals^d$.

The vector $\veta_t$ can be obtained by generalizing equations \eqref{eq:next_loss} and \eqref{eq:next_loss_bound} in the natural way, leading to the bound:
\begin{equation} \label{eq:safecombination_bound}
  f(\vx_t + \mU_t \veta) \le f(\vx_t) + \nabla f(\vx_t)^\tee \mU_t \veta + \veta^\tee \rep{\sI_t} \veta \quad \forall \veta \in [\vec{0}, \vetamax_t]
\end{equation}
for some matrix $\rep{\sI_t} \in \reals^{d \times d}$.

\begin{varalgorithm}{SafeCombination}
  \begin{algorithmic}
  \caption{} \label{alg:safecombination}
  \STATE {\bf Parameters}:
  \begin{enumerate}
  	\item a function $\doracle$ that, given the current iterate $\vx_t$ and observed gradients $\vg_1, \vg_2, \ldots, \vg_t$, returns an $n$ by $d$ update direction matrix.
	\item a function $\mathrm{PolynomialUpperBound}$ that, given a function $h: \reals^d \to \reals$, a vector interval $[\va, \vb] \subseteq \reals^d$, and a point $\vz_0 \in [\va, \vb]$, returns a quadratic polynomial $P$ such that $h(\vz) \le P(\vz)\ \forall \vz \in [\va, \vb]$, and $P(\vz_0) = h(\vz_0)$. 
  \end{enumerate}
  \algrule
  \STATE {\bf Input}: a loss $f: \reals^n \to \reals$, and an initial point $\vx_1 \in \reals^n$.
  \STATE {\bf Output}: a point $\vx_{T+1} \in \reals^n$.
  \algrule
  \STATE Initialize $\vetamax_1 \leftarrow \ones_d \in \reals^d$.
  \FOR {$t$ from $1$ to $T$}
    \STATE Set $\vg_t \leftarrow \nabla f(\vx_t)$.
    \STATE Set $\mU_t \leftarrow \doracle(\vx_t, \set{\vg_s}_{s=1}^t )$.
    \STATE Define $h_t(\veta) \eqdef f(\vx_t + \mU_t \veta)$.
    \STATE Set $P_t \leftarrow \mathrm{PolynomialUpperBound}( h_t, [ \zeros, \vetamax_t ], \zeros)$.
    \STATE Set $\veta_t \leftarrow \mathrm{MinimizeQuadratic}(P_t, [\vec{0}, \vetamax_{t}])$ (see \S\ref{sec:minimizing_quadratic}).
    \STATE Set $\vx_{t+1} \leftarrow \vx_t+ \mU_t \veta_t$.
    \STATE For $i \in \set{1, 2, \ldots, d}$, set $(\vetamax_{t+1})_i \leftarrow \begin{cases}
2 (\vetamax_{t})_i & \mbox { if } (\veta_{t})_i \ge \frac 1 2 (\vetamax_{t})_i \\
\frac 1 2 (\vetamax_{t})_i & \mbox { otherwise.}
\end{cases}$
  \ENDFOR
  \STATE Return $\vx_{T+1}$.
\end{algorithmic}
\end{varalgorithm}

The usual way to minimize the right hand side of \eqref{eq:safecombination_bound} would be to set the derivative to zero, but this does not work here for two reasons.  First, the bound is only valid for $\veta \in [\vec{0}, \vetamax_t]$, and setting the derivative to zero might yield a value outside this hyperrectangle. Second, the matrix $\rep{\sI_t}$ is not necessarily positive semidefinite, so setting the derivative to zero does not necessarily give us the global minimum.  Nevertheless, the right hand side can be approximately minimized over the hyperrectangle $[\zeros, \vetamax_t]$ using a variant of conjugate gradient descent, which we describe in Appendix B.

The vector $\vetamax_t$ is determined adaptively using a scheme similar to \eqref {eq:adaptive_eta}, but applied to each of the $d$ components of $\vetamax_t$ independently. %

It can be shown that, by setting $d = n$ and letting $\mU_t$ be the identity matrix, we obtain a second-order optimizer that can minimize quadratic losses (e.g., least squares linear regression) in one step.

\subsection{Theoretical Guarantees} \label{sec:theory}

Our universal MM algorithms enjoy a number of desirable theoretical guarantees, which we now state.  The proofs are given in Appendix~\ref{sec:proofs}.

First, like other MM optimizers, our universal MM optimizers are guaranteed to monotonically reduce the loss.

\begin{proposition} \label{prop:monotone}
The iterates of \saferate and \safecombination satisfy
\[
  f(\vx_{T+1}) \le f(\vx_T) \le \ldots \le f(\vx_1).
\]
\end{proposition}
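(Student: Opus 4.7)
The plan is to apply the standard majorization-minimization telescoping inequality, adapted to the fact that our majorizers are defined over a restricted subspace (and with a finite trust region). Since the inequality $f(\vx_{t+1}) \le f(\vx_t)$ holds at every step by the same chain of reasoning, a one-step argument combined with induction over $t$ suffices.

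For \saferate, I would first record the two key properties of $P_t$ guaranteed by the $\mathrm{PolynomialUpperBound}$ contract: $h_t(\eta) \le P_t(\eta)$ for all $\eta \in [0, \etamax_t]$, and $P_t(0) = h_t(0)$. Combined with $h_t(0) = f(\vx_t)$ and $h_t(\eta_t) = f(\vx_{t+1})$ (by the definitions of $h_t$ and $\vx_{t+1}$), and with $0 \in [0, \etamax_t]$ so that $P_t(\eta_t) \le P_t(0)$ by the definition of $\eta_t$ as the minimizer over $[0, \etamax_t]$, I would chain
\[
  f(\vx_{t+1}) = h_t(\eta_t) \le P_t(\eta_t) \le P_t(0) = h_t(0) = f(\vx_t).
\]
Induction on $t$ then yields the full chain of inequalities.

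For \safecombination, the argument is analogous with $\veta$ replacing $\eta$, $\mU_t\veta$ replacing $\eta\vv_t$, the hyperrectangle $[\zeros,\vetamax_t]$ replacing $[0,\etamax_t]$, and $\zeros \in [\zeros,\vetamax_t]$ playing the role of the tight point. The one subtlety is that $\veta_t$ is not an exact minimizer of $P_t$ but is produced by $\mathrm{MinimizeQuadratic}$. So the slightly harder step is verifying the single inequality $P_t(\veta_t) \le P_t(\zeros)$: this I would establish by appealing to the fact (guaranteed by the construction of $\mathrm{MinimizeQuadratic}$ described in Appendix B, which is initialized at $\zeros$ and only moves to points with strictly lower $P_t$-value within the hyperrectangle) that $\mathrm{MinimizeQuadratic}$ never returns a point worse than its starting point $\zeros$.

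The main obstacle, then, is not the MM telescoping itself — that is essentially the textbook argument — but rather pinning down that $\mathrm{MinimizeQuadratic}$ in \safecombination satisfies $P_t(\veta_t) \le P_t(\zeros)$. Once that monotonicity of the inner solver is noted, the rest is a short computation; I would state this as a one-line remark (or a tiny lemma) referencing Appendix B and then write out the telescoped chain for both algorithms in parallel.
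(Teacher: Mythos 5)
Your proof is correct and follows essentially the same route as the paper, which relies on the standard MM chain $f(\vx_{t+1}) \le \bar f(\vx_{t+1},\vx_t) \le \bar f(\vx_t,\vx_t) = f(\vx_t)$ sketched in \S 2.1 (the appendix never writes out a separate proof of this proposition). Your explicit handling of the \safecombination{} subtlety --- that $\mathrm{MinimizeQuadratic}$ starts at $\zeros$ and never returns a point with larger $P_t$-value, as guaranteed by the theorem in Appendix B --- is a welcome detail the paper leaves implicit.
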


The next question we would like to answer is how fast our universal MM optimizers converge.  Because we have not yet developed a complete theory around the tightness of the bounds returned by \autobound, we cannot yet answer this question in full generality.  However, under the standard assumption of a $\beta$-smooth loss, and assuming that the $\mathrm{PolynomialUpperBound}$ function returns the upper bound that follows from the definition of $\beta$-smoothness, we can show our optimizers enjoy convergence guarantees similar to those of backtracking line search \cite{armijo1966minimization}, but without the need to backtrack or to tune line search hyperparameters.  
As is standard in non-convex optimization, the guarantees are stated in terms of the minimum gradient norm.

\newcommand{\thmglobalconvergence}{
Let $f$, $\vv_t$, and $P_t$ be defined as in the pseudocode for \saferate.  Suppose, for some $\beta \ge 1$, $f$ is $\beta$-smooth over the level set $\set{\vy: f(\vy) \le f(\vx_1)}$, and suppose that the $\mathrm{PolynomialUpperBound}$ function returns
\[
  P_t(\eta) = f(\vx_t) + \eta \nabla f(\vx_t)^\tee \vv_t + \frac \beta 2 \eta^2 \paren { f(\vx_t)^\tee \vv_t }^2.
\]
Further suppose $\vv_t = - \nabla f(\vx_t)$.  Then,
\[
  \min_{1 \le t \le T} \set { \norm{ \nabla f(\vx_t) }^2 } \le \frac { 2 \beta \paren { f(\vx_1) - f(\vx_{T+1}) }  } {T}.
\]
The same guarantee holds for \safecombination, provided the negative gradient is one of the $d$ update directions.
}
\newcommand{\thmglobalconvergenceproof}{
By our $\beta$-smoothness assumption, for any $\vz$ with $f(\vz) \le f(\vx_1)$, and for any $t$, we have the quadratic upper bound
\begin{equation}
  f(\vz) \le f(\vx_t) + \nabla f(\vx_t)^\tee (\vz - \vx_t) + \frac \beta 2 \norm{\vz - \vx_t}^2. 
\end{equation}
By Proposition~\ref{prop:monotone}, $f(\vx_{t+1}) \le f(\vx_t)$.  Therefore, taking $\vz = \vx_{t+1}$, and letting $\vg_t = \nabla f(\vx_t)$ for compactness, we have
\begin{align}
  f(\vx_{t+1})
  & \le f(\vx_t) + \vg_t^\tee (-\eta_t \vg_t) + \frac \beta 2 \norm{-\eta_t \vg_t }^2 \nonumber \\
  & = f(\vx_t) - \norm{\vg_t}^2 \paren { \eta_t - \frac {\beta} {2} \eta_t^2 }.
\end{align}
We will show below that for all $t$,
\begin{equation} \label{eq:etat}
  \eta_t = \frac 1 \beta .
\end{equation}
We therefore have
\begin{equation}
  f(\vx_{t+1}) \le f(\vx_t) - \frac {1} {2 \beta} \norm{\vg_t}^2.
\end{equation}
Summing this inequality over all $t$ then proves
\begin{equation} \label{eq:summed_inequality}
  f(\vx_{T+1}) - f(\vx_1) \le - \frac {1} {2 \beta} \sum_{t=1}^T \norm{\vg_t}^2.
\end{equation}
Rearranging \eqref{eq:summed_inequality}, and using $\min_{1 \le t \le T} \set {\norm{\vg_t}^2} \le \frac 1 T \sum_{t=1}^T \norm{\vg_t}^2$ completes the proof.

We now prove \eqref{eq:etat} by induction.  First note that the unconstrained minimizer of $P_t$ is
\begin{equation} \label {eq:unconstrained_eta}
  \argmin_{\eta \in \reals} \set { P_t(\eta) } = \frac 1 \beta.
\end{equation}
Therefore, $\eta_t = \frac 1 \beta$ so long as $\etamax_t \ge \frac 1 \beta$.  For $t = 1$, we have $\etamax_t = 1$, and because $\beta \ge 1$ by assumption, $\frac 1 \beta \le \etamax_1$ and therefore $\eta_1 = \frac 1 \beta$.

Now assume that for some arbitrary $t$, we have $\eta_t = \frac 1 \beta$.  Because $\eta_t \in [0, \etamax_t]$, this implies $\etamax_t \ge \frac 1 \beta$.  Considering the two cases $\eta_t \ge \frac 1 2 \etamax_t$ and $\eta_t < \frac 1 2 \etamax_t$, we see that in either case,
\begin{equation}
  \etamax_{t+1} \ge \frac 1 \beta.
\end{equation}
Therefore, by \eqref{eq:unconstrained_eta}, $\eta_{t+1} = \frac 1 \beta$.
}
\begin{theorem} \label{thm:global_convergence}
\thmglobalconvergence
\end{theorem}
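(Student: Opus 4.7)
The plan is to prove a per-iteration descent bound of the form $f(\vx_{t+1}) \le f(\vx_t) - \frac{1}{2\beta} \|\nabla f(\vx_t)\|^2$, telescope it over $t = 1, \ldots, T$, and apply the standard inequality $\min_{1 \le t \le T} \|\vg_t\|^2 \le \frac{1}{T}\sum_{t=1}^T \|\vg_t\|^2$ to conclude. By Proposition~\ref{prop:monotone}, every iterate stays in the level set $\{\vy : f(\vy) \le f(\vx_1)\}$, so the $\beta$-smoothness hypothesis applies at every step and yields the standard quadratic upper bound on $f(\vx_t - \eta \nabla f(\vx_t))$ as a function of $\eta$.

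The key lemma to establish is that $\eta_t = 1/\beta$ on every iteration. Since the given $P_t$ is a one-dimensional quadratic in $\eta$ whose unconstrained minimizer is $1/\beta$, it suffices to show $\bar\eta_t \ge 1/\beta$ for all $t$. I would prove this by induction on $t$. The base case uses $\bar\eta_1 = 1 \ge 1/\beta$, which holds because $\beta \ge 1$ by assumption. For the inductive step, assume $\bar\eta_t \ge 1/\beta$, so $\eta_t = 1/\beta$. If the adaptive rule fires the doubling branch, then $\bar\eta_{t+1} = 2\bar\eta_t \ge 2/\beta \ge 1/\beta$. If it fires the halving branch, the trigger condition $\eta_t < \bar\eta_t/2$ combined with $\eta_t = 1/\beta$ forces $\bar\eta_t > 2/\beta$, so $\bar\eta_{t+1} = \bar\eta_t/2 > 1/\beta$. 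With $\eta_t = 1/\beta$ in hand, the $\beta$-smoothness upper bound evaluated at $\vx_t + \eta_t \vv_t$ immediately produces the desired per-step descent.

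For \safecombination, the same strategy applies with the candidate $\veta = (1/\beta)\vec{e}_j$, where $j$ indexes the column of $\mU_t$ equal to $-\nabla f(\vx_t)$. Whenever $(1/\beta)\vec{e}_j \in [\zeros, \vetamax_t]$, the optimality of $\veta_t$ under the majorizer guarantees at least as much descent as the \saferate step with $\eta = 1/\beta$, since $f(\vx_{t+1}) \le P_t(\veta_t) \le P_t((1/\beta)\vec{e}_j)$, and the right-hand side equals $f(\vx_t) - \frac{1}{2\beta}\|\nabla f(\vx_t)\|^2$. An analogous coordinate-wise induction on $(\vetamax_t)_j$ secures the feasibility of this reference point at every step.

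The main obstacle is the trust-region induction: one must verify that the simple doubling/halving adaptation never shrinks the trust region below the ideal step size $1/\beta$. This is not immediate but follows from a careful case split on the halving branch, exploiting the precise trigger condition $\eta_t < \bar\eta_t/2$ to rule out the bad case in which a shrinkage could drop $\bar\eta_{t+1}$ below $1/\beta$.
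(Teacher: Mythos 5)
Your proposal is correct and follows essentially the same route as the paper's proof: establish $\eta_t = 1/\beta$ by induction on the trust-region size (with the identical case split on the doubling/halving rule), derive the per-step descent $f(\vx_{t+1}) \le f(\vx_t) - \frac{1}{2\beta}\norm{\vg_t}^2$ from $\beta$-smoothness, telescope, and apply $\min \le$ average. Your version phrases the induction hypothesis as $\bar\eta_t \ge 1/\beta$ rather than $\eta_t = 1/\beta$ and spells out the \safecombination reference point $(1/\beta)\vec{e}_j$ more explicitly than the paper does, but these are cosmetic differences.
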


Lastly, we consider the running time of our algorithms.  Both algorithms require an additional forward pass, whose time complexity is some multiple of that required to compute $f(\vx)$.  We state our result in terms of the number of multiplications, which is the dominant term in the time complexity for the loss functions of interest.

\newcommand{\thmtimecomplexity}{
Let $f: \reals^n \to \reals$ be a function such that for any $\vx \in \reals^n$, computing $f(\vx)$ requires $M_0$ multiplications, and calling $\mathrm{DirectionOracle}$ requires $M_1$ multiplications.  Then, using AutoBound as the PolynomialUpperBound function, each iteration of \saferate requires $O(M_1 + M_0 k \log(k))$ multiplications, where $k$ is the polynomial degree, and each iteration of \safecombination requires $O(M_1 + M_0 d^\omega)$ multiplications, where $\omega < 2.37286$ is the matrix multiplication exponent.
}
\newcommand{\thmtimecomplexityproof}{
On each iteration, \saferate must call $\mathrm{DirectionOracle}$ (using $M_1$ multiplications), and then must call AutoBound.  For each multiplication required to compute $f(\vx_t)$, AutoBound needs to multiply two degree $k$ interval polynomials, each of which has the property that only the final coefficient is an interval (and all other coefficients are scalars).  Using the Fast Fourier Transform, the product of two such interval polynomials can be computed using $O(k \log(k))$ multiplications.  Therefore, each iteration takes $O(M_1 + M_0 k \log(k))$ multiplications.

For \safecombination, the proof is similar, except that for each multiplication needed to compute $f(\vx_t)$ we must now multiply two multivariate quadratic interval polynomials.  Up to a constant factor, this has the same time complexity as multiplying two $d$ by $d$ matrices.  This can be done in time $O(d^\omega)$, where $\omega$ is the matrix multiplication exponent.  Thus the total time complexity is $O(M_1 + M_0 d^\omega)$.  At the time of writing, the best bound on the matrix multiplication exponent is $\omega < 2.37286$ \cite{alman2021refined}.
}
\begin{theorem} \label{thm:time_complexity}
\thmtimecomplexity
\end{theorem}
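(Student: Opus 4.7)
The plan is to decompose the per-iteration cost into the contributions from the DirectionOracle call, the gradient evaluation $\nabla f(\vx_t)$, and the call to the PolynomialUpperBound routine (instantiated as AutoBound). The oracle contributes $M_1$ multiplications by assumption, and reverse-mode automatic differentiation gives $\nabla f(\vx_t)$ in $O(M_0)$ multiplications. Both costs are dominated by the AutoBound cost, so the entire theorem reduces to analyzing the overhead AutoBound introduces per scalar multiplication in the original computation graph.

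For \saferate, the function being bounded is $h_t(\eta) = f(\vx_t + \eta \vv_t)$, which AutoBound traces through the same computation graph as $f$, but propagating at each intermediate node a degree-$k$ interval polynomial in the scalar variable $\eta$. The key structural observation is that each coefficient below degree $k$ can be maintained as a scalar, with a single interval-valued coefficient at degree $k$ absorbing the truncated remainder; this is because the inputs $(\vx_t + \eta \vv_t)_i$ are exact (non-interval) degree-$1$ polynomials. Under this representation, multiplying two degree-$k$ interval polynomials reduces to multiplying two scalar polynomials of degree $k$ (computed in $O(k \log k)$ multiplications via FFT) plus $O(k)$ extra work to fold cross terms into the interval remainder. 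Elementary univariate operations ($\exp$, $\log$, $\softplus$, etc.) can be processed in $O(k \log k)$ via the standard Taylor composition recurrence. Thus each of the $M_0$ multiplications contributes $O(k \log k)$, giving the claimed $O(M_1 + M_0 k \log k)$ total.

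For \safecombination, the degree is fixed at $k = 2$, but the polynomial is now multivariate in $d$ variables $\veta$. Each intermediate value is represented as a quadratic in $\veta$ with $1 + d$ scalar coefficients (constant and linear terms) and one $d \times d$ interval-valued quadratic coefficient. Multiplying two such interval quadratics and truncating back to a quadratic reduces, up to constant factors, to multiplying two $d \times d$ matrices: the highest cost piece is the contribution of the two linear parts, which contracts into the quadratic coefficient as an outer-product-style $d \times d$ update. Using the current best bound on the matrix multiplication exponent $\omega < 2.37286$, this costs $O(d^\omega)$ per original multiplication, giving the stated $O(M_1 + M_0 d^\omega)$ per iteration. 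The final step of computing $\veta_t$ by minimizing a $d$-dimensional quadratic on a box is lower order and can be absorbed.

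The main obstacle is making the structural claims about AutoBound's interval polynomial representation fully precise: specifically, justifying that only the top-degree coefficient need carry an interval and that polynomial (resp.\ quadratic-in-$d$-variables) multiplication under this representation really does reduce to a scalar FFT (resp.\ $d \times d$ matrix multiplication) rather than incurring interval-arithmetic overhead that would break the FFT or inflate the exponent. Once this representation is pinned down, the rest is a routine accounting over the $M_0$ multiplications of the original graph.
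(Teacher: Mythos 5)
Your proposal is correct and follows essentially the same route as the paper's proof: charge $M_1$ to the oracle, and for each of the $M_0$ multiplications charge the cost of multiplying two degree-$k$ interval polynomials whose only interval coefficient is the top one (via FFT, $O(k\log k)$) for \saferate, and the cost of multiplying two multivariate quadratic interval polynomials (equivalent up to constants to $d\times d$ matrix multiplication, $O(d^\omega)$) for \safecombination. The paper's proof is in fact terser than yours and simply asserts the structural facts you flag as the main obstacle, so your version is, if anything, more explicit.
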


\section{Experiments} \label{sec:mm_experiments}

We now compare our universal MM optimizers to existing optimizers, on a suite of full-batch optimization problems.

The code for the AutoBound algorithm we use to derive majorizers is available on GitHub.\footnote{\url{https://github.com/google/autobound}}

\subsection{Optimizers} \label{sec:optimizers}

We compare \saferate and \safecombination to four existing optimizers: Adam \cite{kingma2015adam}, AdaGrad \cite{duchi2011adaptive}, gradient descent, and backtracking line search using the Armijo-Goldstein condition \cite{armijo1966minimization}.

For Adam, AdaGrad, and gradient descent, we consider all learning rates in the set $\set{10^i: i \in \integers, -4 \le i \le 1}$, and present results for the best-performing learning rate (learning rates that are powers of 10 outside this grid performed poorly on all problems).  Other Adam hyperparameters are left at the default values recommended in the original paper \cite{kingma2015adam}.

\emph{Backtracking line search} refers to a gradient descent optimizer that chooses the learning rate on each step using a backtracking line search first described by Armijo \cite{armijo1966minimization}.  Starting at a point $\vx$, backtracking line search seeks to find a point $\vx'$ that reduces the loss by at least $\frac \alpha 2 \norm{\nabla f(\vx)}^2$, where $\alpha$ starts at an initial value $\alpha_0 \in \reals$, and is halved until this condition is satisfied.  We consider all values of $\alpha_0 \in \set{10^i: i \in \integers, -5 \le i \le 3}$, and report results for the value of $\alpha_0$ that achieved the minimum loss.

These optimizers and hyperparameter grids are summarized in Table~\ref{tab:optimizers}.

\begin{table*}[h]
        \caption{Optimizers used in our experiments.}
        \label{tab:optimizers}
  \centering
        \begin{small}
        \begin{sc}
                \begin{tabular}{ll}
    \toprule
          Optimizer & Hyperparameters \\
    \midrule

\makecell[l]{AdaGrad \cite{duchi2011adaptive} \\ (diagonal matrix version) } & \makecell[l]{$\eta \in \set{10^i: i \in \set{10^i: i \in \integers, -4 \le i \le 1}}$, \\ $\delta = 0$} \\
Adam \cite{kingma2015adam} & \makecell[l]{$\eta \in \set{10^i: i \in \set{10^i: i \in \integers, -4 \le i \le 1}}$, \\ $\beta_1 = 0.9$, $\beta_2 = 0.999$, $\epsilon=10^{-8}$} \\
Backtracking Line Search \cite{armijo1966minimization} & $\alpha_0 \in \set{10^i: i \in \integers, -5 \le i \le 3}$ \\
GD & $\eta \in \set{10^i: i \in \set{10^i: i \in \integers, -4 \le i \le 1}}$ \\
SafeRate[AdaGrad] (ours)  & $\delta = 0$ \\
SafeRate[Adam] (ours)  & $\beta_1 = 0.9$, $\beta_2 = 0.999$, $\epsilon=10^{-8}$ \\
SafeRate[GD] (ours)  & -- \\
SafeCombination[per-layer AdaGrad] (ours) & $\delta = 0$ \\
SafeCombination[per-layer Adam] (ours)  & $\beta_1 = 0.9$, $\beta_2 = 0.999$, $\epsilon=10^{-8}$ \\
SafeCombination[per-layer GD] (ours) & -- \\
\bottomrule
  \end{tabular}
  \end{sc}
  \end{small}
\end{table*}

In addition, we evaluate the performance of \saferate using three choices for the directional oracle: SafeRate[GD] uses the negative gradient direction, SafeRate[AdaGrad] uses the AdaGrad direction (determined based on the observed history of gradient vectors), and SafeRate[Adam] uses the Adam direction.  The SafeRate[AdaGrad] and SafeRate[Adam] update directions are based on a learning rate of .1 for the underlying AdaGrad/Adam algorithm, and the recommended default values for all other hyperparameters.  Note that the learning rate affects only the scale of the update direction, and the behavior of \saferate is largely invariant to this scale (it is not strictly invariant because of the way the scale interacts with the trust region size).  For this reason we do not need to tune the learning rate.

Finally, we evaluate the performance of \safecombination, using three choices for the matrix of update directions.  For SafeCombination[per-layer GD], there is an update direction for each tensor $\mW$ that appears in the loss, and the update direction for a tensor $\mW$ equals the negative gradient with entries for tensors other than $\mW$ zeroed out.  Thus, when applied to a neural network optimization problem, SafeCombination[per-layer GD] computes a per-layer learning rate (hence its name).  SafeCombination[per-layer AdaGrad] and SafeCombination[per-layer Adam] are similar, except they use the per-layer update directions given by AdaGrad and Adam, respectively, rather than the negative gradient.  The Adam/AdaGrad update directions are determined in the same way described in the previous paragraph.

When plotting an optimizer's performance as a function of the number of steps, we consider each distinct point evaluated as a separate step (for backtracking line search, this includes points where the sufficient-loss-decrease condition is not satisfied).

\subsection{One-Dimensional Problems} \label{sec:one_d_problems}

We begin by experimenting with synthetic one-dimensional problems.  This will allow us to understand the behavior of \saferate qualitatively, in a low-dimensional setting where the results can be easily visualized.

\subsubsection{Losses}

\begin{table*}[h]
        \caption{One-dimensional loss functions used in our experiments.}
        \label{tab:one_d_losses}
  \centering
        \begin{small}
        \begin{sc}
                \begin{tabular}{lll}
    \toprule
          Problem name & Loss function \\
    \midrule

  1-d least squares linear regression &  $x \mapsto (x - \frac 3 2)^2$ \\
  1-d linear regression with non-normal errors & $x \mapsto (x - 3)^4$ \\
  1-d logistic regression & $x \mapsto \frac 2 3 \log(1+\exp(x)) + \frac 1 3 \log(1+\exp(-x))$ \\
  Optimizing a single neural network parameter & $x \mapsto ((\mathrm{sigmoid}(x - 10) - \frac 1 2)^2$ \\
\bottomrule
  \end{tabular}
  \end{sc}
  \end{small}
\end{table*}

Table~\ref{tab:one_d_losses} summarizes the loss functions used in these experiments.  Each loss is a one-dimensional example of a widely-studied problem.  For least squares linear regression, we consider a problem with a single feature, making the loss a quadratic polynomial.  The specific choice of quadratic does not qualitatively change the results; we choose $(x - \frac 3 2)^2$.  We also consider linear regression with error terms drawn from a \emph{generalized symmetric normal distribution} \cite{mineo2005software,zeckhauser1970linear}.  Setting the $\beta$ parameter of the generalized symmetric normal distribution to 4 results in a quartic polynomial loss, where again the specific choice of quartic does not qualitatively change the results.  For logistic regression, we choose a one-dimensional problem where two thirds of the labels are negative, and there is a single feature whose value is always 1.

As a one-dimensional example of neural network training, we consider the loss $f(x) = ((\mathrm{sigmoid}(x - 10) - \frac 1 2)^2$.  Minimizing this loss can be thought of as optimizing the bias parameter in the first layer of a single-hidden-layer neural network with sigmoid activation functions and squared error, with a training set of size 1 and appropriate initial values for parameters other than $x$.  This loss is interesting because initially (when $x = 0$) the input to the sigmoid is very small, resulting in very small initial gradients that can pose a challenge for traditional optimizers.

\begin{figure}[h]
\begin{center}
\includegraphics[width=0.45\linewidth]{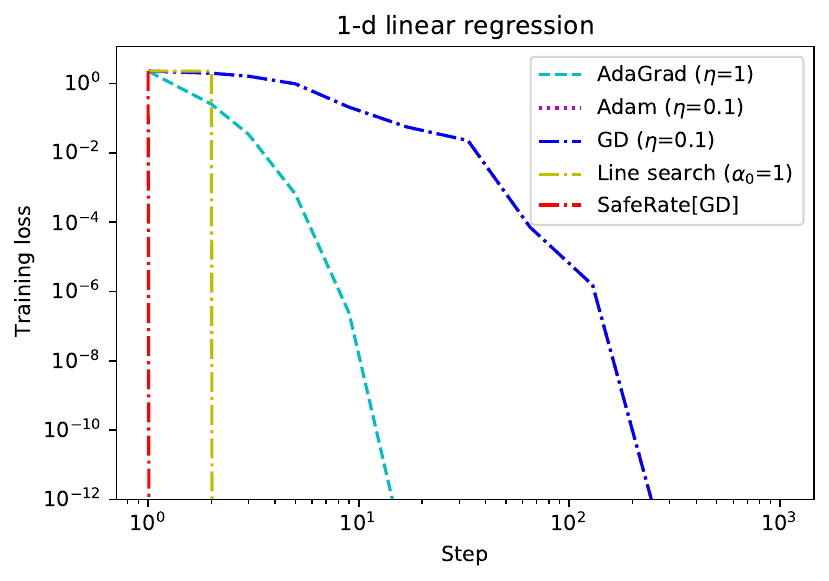}
\includegraphics[width=0.45\linewidth]{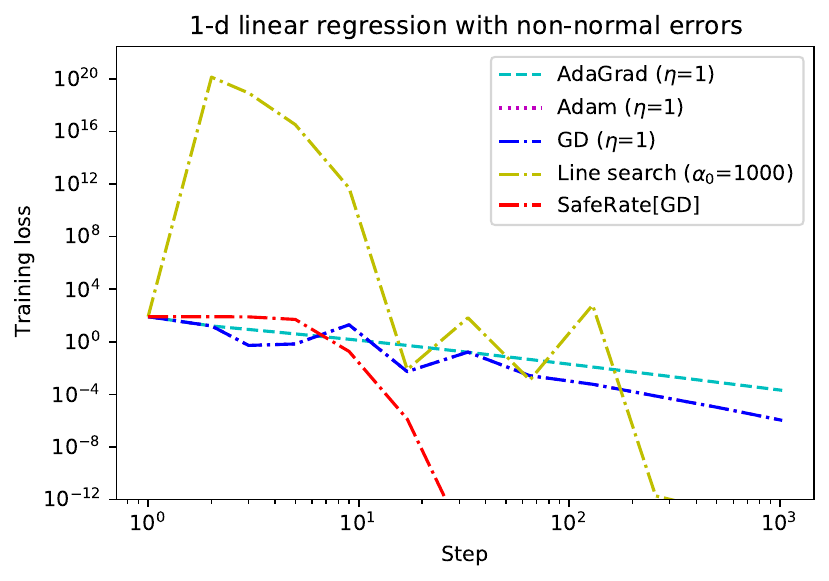}
\includegraphics[width=0.45\linewidth]{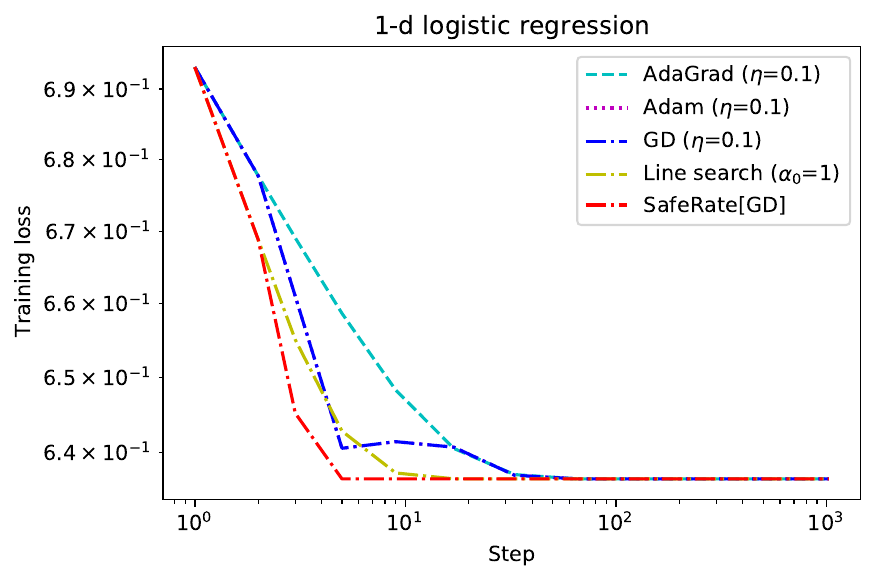}
\includegraphics[width=0.45\linewidth]{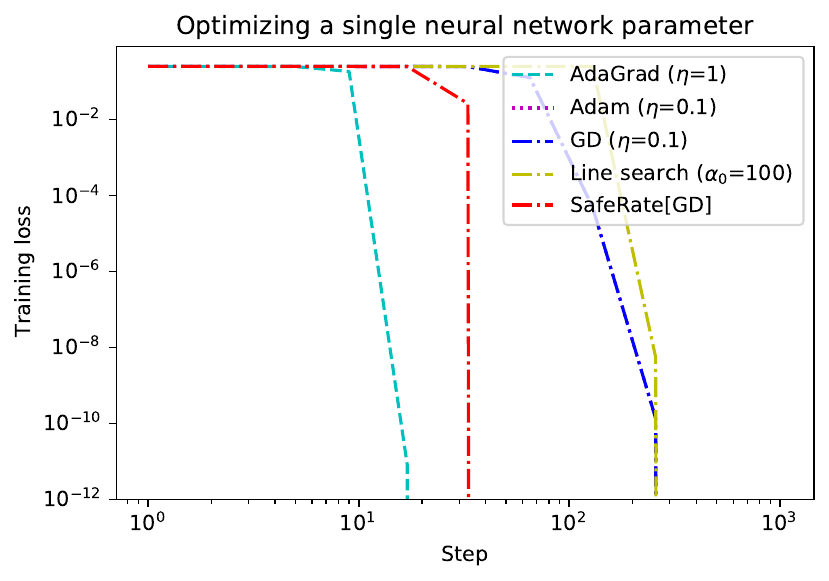}
\caption{Comparison of optimizers on one-dimensional optimization problems.  Each plot shows the loss as a function of the number of iterations (log scale).  For optimizers other than \saferate, the plot shows the best-performing hyperparameter from the grid defined in Table~\ref{tab:optimizers} (the best-performing hyperparameter is different for each plot).  Despite not requiring hyperparameter tuning, \saferate generally outperforms tuned versions of all baseline optimizers.  For these one-dimensional problems, all optimizers require similar wall time per step.}
\label{fig:one_d_optimizer_comparison}
\end{center}
\end{figure}

\subsubsection{Performance comparison}

Figure~\ref{fig:one_d_optimizer_comparison} compares the performance of \saferate to that of tuned versions of the baseline optimizers, on the four one-dimensional optimization problems given in Table~\ref{tab:one_d_losses}.  For optimizers other than \saferate (which has no hyperparameters), each plot shows the best-performing\footnote{Performance is measured by the minimum loss reached on any step.} hyperparameter settings for each problem, considering all the settings in the grid defined by Table~\ref{tab:optimizers}.  For these one-dimensional problems, all optimizers require similar wall time per step.

We make the following observations:
\begin{itemize}
    \item Despite not requiring hyperparameter tuning, \saferate outperforms tuned version of all baseline optimizers on all four problems, with one exception (with a learning rate of 1, AdaGrad performs better on the neural network parameter optimization problem). 
	\item For 1-d least squares linear regression, the quadratic majorizer derived using \autobound is exact, and thus \saferate jumps to the global minimum on the first step.%
	\item For 1-d linear regression with non-normal errors, \saferate converges super-linearly\footnote{An optimizer is said to converge linearly if the log of the optimality gap decreases linearly as a function of the log of the number of steps.}, whereas gradient descent and AdaGrad appear to converge linearly.	
\end{itemize}

\subsubsection{Safe learning rates}

We now examine in more detail how \saferate behaves on these one-dimensional problems.  Recall that the ``safe'' learning rate computed by \saferate on step $t$ depends on two things: the current iterate $x_t$, and the maximum learning rate $\etamax_t$ (which determines the trust region, $[0, \etamax_t]$).  Figure~\ref{fig:one_d_safe_etas} depicts the safe learning rate as a function of $x = x_t$, for various values of the trust region.

\begin{figure}[h]
\begin{center}
\includegraphics[width=0.45\linewidth]{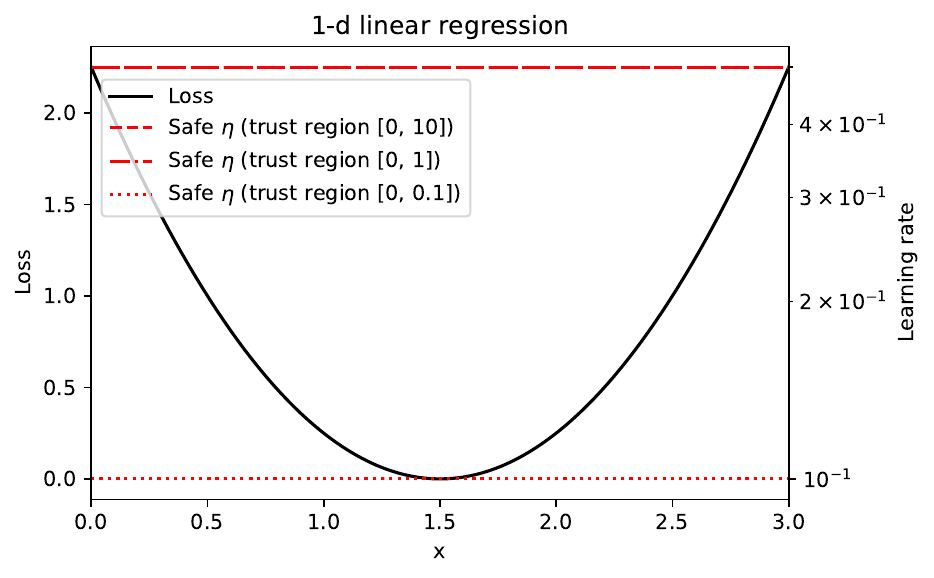}
\includegraphics[width=0.45\linewidth]{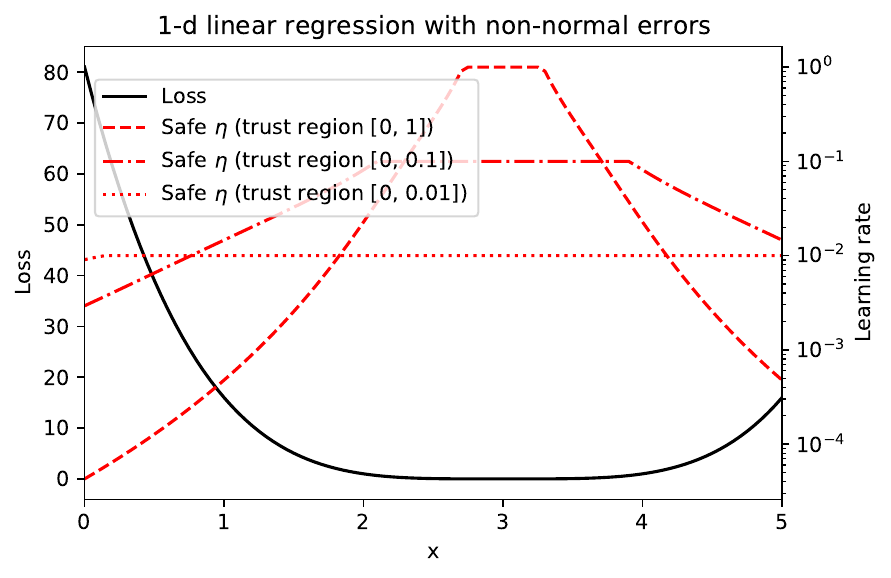}
\includegraphics[width=0.45\linewidth]{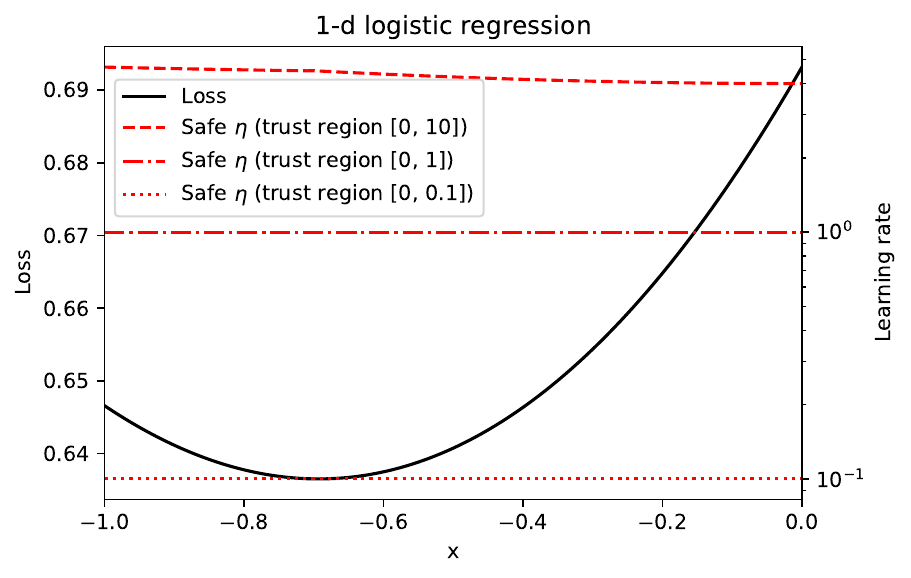}
\includegraphics[width=0.45\linewidth]{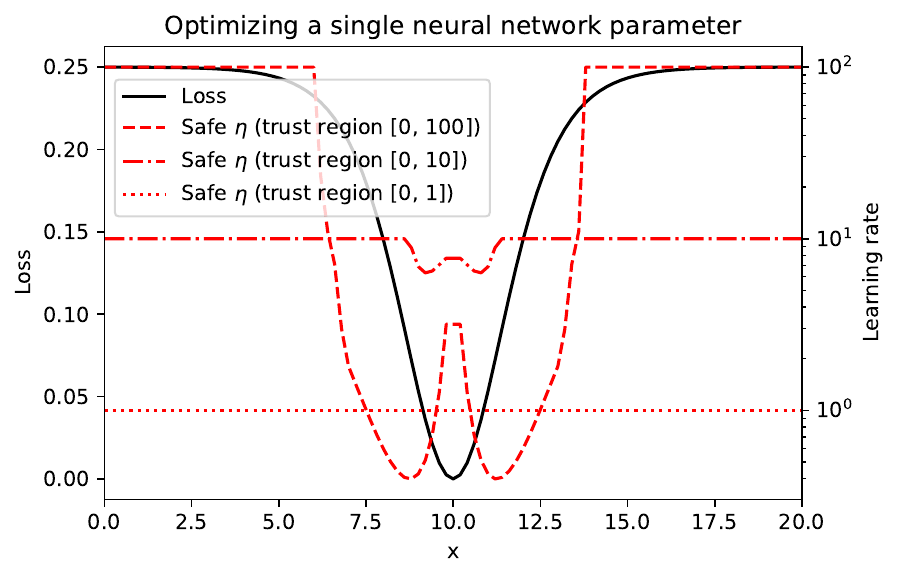}
\caption{Safe gradient descent learning rates for various one-dimensional problems, as a function of the current iterate $x$ and the trust region.  For each $x$, each plotted learning rate $\eta$ is computed using the symbolic expression for the loss $f$, and is guaranteed to reduce the loss:  $f(x - \eta \nabla f(x)) \le f(x)$.}
\label{fig:one_d_safe_etas}
\end{center}
\end{figure}

In examining Figure~\ref{fig:one_d_safe_etas}, several things are worth noting:
\begin{itemize}
  \item In general, the learning rate depends in a non-trivial way on the current iterate $x$, and can be orders of magnitude larger for some $x$ than for others.
  \item For some problems, the ``safe'' learning rate increases dramatically as one approaches the global minimum, while for other problems it decreases dramatically.
  \item The optimal trust region width (i.e., the one that lets us compute the largest ``safe'' learning rate) also depends on $x$.  For example, for the linear regression problem with non-normal errors, at $x = 0$ the optimal trust region width is .01; at $x = 1$ it is 0.1, and at $x = 2.9$ it is 1.
\end{itemize}
These points illustrate the potential for \saferate to non-trivially adapt the learning rate during the course of optimization, and make clear that the trust region must adapt over time if we wish to compute the largest possible safe learning rates.

\subsubsection{SafeRate trust region adaptation}

Figure~\ref{fig:one_d_saferate_etas} shows how the learning rate $\eta_t$ and the maximum learning rate $\etamax_t$ (which determines the trust region) evolve when running \saferate on each of the one-dimensional problems.

\begin{figure}[h]
\begin{center}
\includegraphics[width=0.45\linewidth]{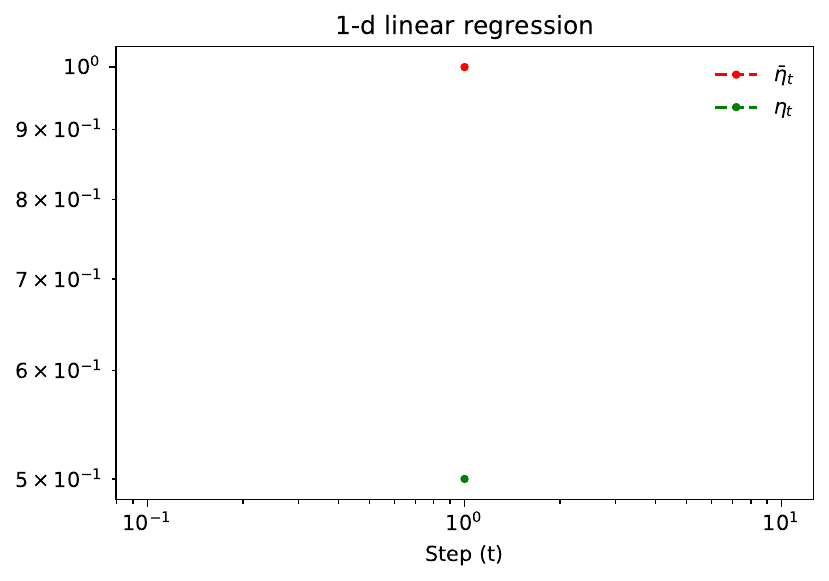}
\includegraphics[width=0.45\linewidth]{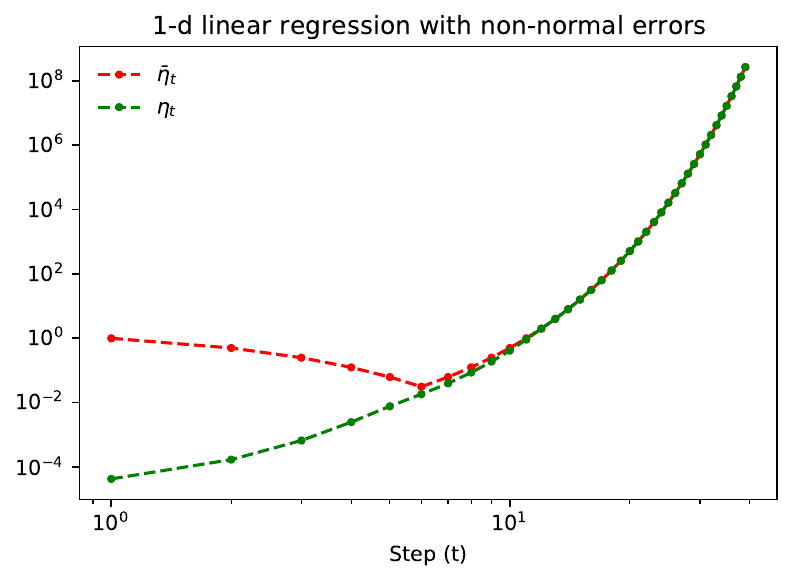}
\includegraphics[width=0.45\linewidth]{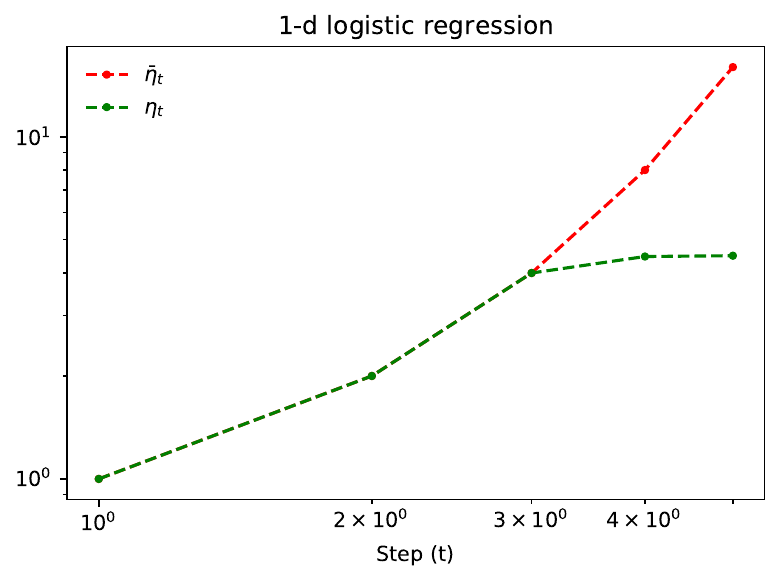}
\includegraphics[width=0.45\linewidth]{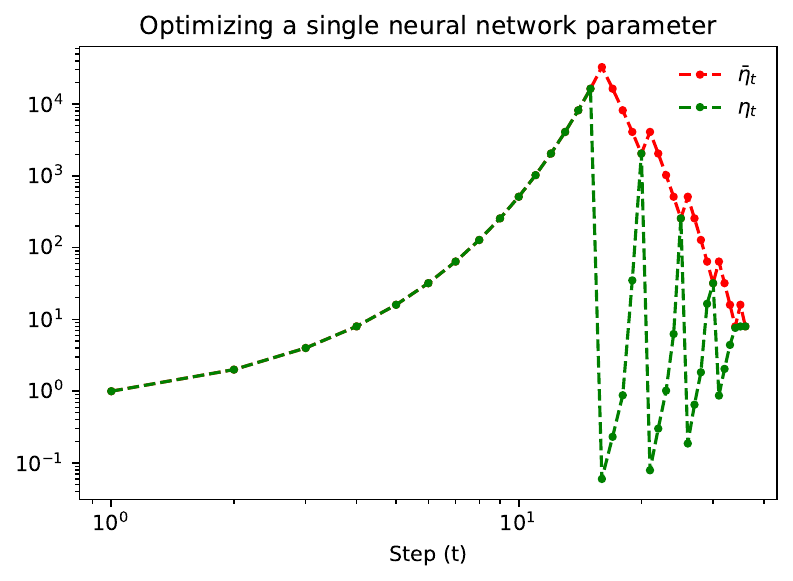}
\caption{The evolution of $\eta_t$ and $\etamax_t$ when using \saferate to minimize various one-dimensional losses.}
\label{fig:one_d_saferate_etas}
\end{center}
\end{figure}

We summarize the results shown in Figure~\ref{fig:one_d_saferate_etas} as follows:
\begin{itemize}
  \item For least squares linear regression, \saferate converges in one step, and thus trust region adaptation plays no role.
  \item For linear regression with non-normal errors, the maximum learning rate $\etamax_t$ is initially too large, and hence $\eta_t \ll \etamax_t$, leading to slow progress.  As a result, \saferate decreases $\etamax_t$ exponentially, which causes $\eta_t$ to increase exponentially.  Then, once $\eta_t \approx \etamax_t$, both $\etamax_t$ and $\eta_t$ \emph{increase} exponentially for the remainder of the optimization process.  We note that this learning rate schedule is very different from the regret-bound-minimizing schedule used in algorithms such as AdaGrad \cite{duchi2011adaptive} and FTPRL \cite{mcmahan2010adaptive}, which always decreases the learning rate and does so at a polynomial rate.
  \item For logistic regression, $\etamax_t$ is initially too small, causing $\eta_t$ to be capped at its maximum value.  This causes $\etamax_t$ to double until this is no longer the case, leading to convergence in a few steps.
  \item For the one-dimensional neural network problem, the gradients are initially very small, and a very large learning rate is necessary to make progress.  As in the logistic regression problem, $\eta_t$ is initially capped at its maximum value, causing $\etamax_t$ to double until this is no longer the case.  Then, once the optimizer reaches the part of the sigmoid curve where the gradients are larger, $\eta_t$ suddenly decreases by more than four orders of magnitude.  This causes $\etamax_t$ to decrease, causing $\eta_t$ to increase again, and $\eta_t$ continues to oscillate up and down for the remainder of the optimization run, while at the same time the loss decreases rapidly (as shown in Figure~\ref{fig:one_d_optimizer_comparison}).
\end{itemize}
Overall, the simple doubling/halving heuristic used by \saferate is very effective on these problems, and leads to qualitatively different behaviors on each problem.

\subsection{Random Regression Problems}

For our next experiment, we evaluate \saferate on randomly-generated linear and logistic regression problems.  For linear regression, we consider both least-squares linear regression, as well as linear regression with non-normal error terms, described in more detail below.

\newcommand{\betastar}{\boldsymbol{\beta}^*}

We generate random, well-specified regression problems by sampling feature vectors from a normal distribution with a specified covariance matrix, and generating labels based on a known model.  Letting $d$ denote the number of features, the covariance matrix is $\mZ^\tee \mZ$, where $\mZ$ is a $d$ by $d$ matrix whose elements are drawn from a standard normal distribution.  The true model is $\betastar \in \reals^d$, where each coordinate of $\betastar$ is drawn independently from a standard normal distribution.  The labels are generated as follows:
\begin{itemize}
  \item For least-squares linear regression, the label for an example with feature vector $\va$ is drawn from a normal distribution with mean $\va^\tee \betastar$ and standard deviation 0.1.
  \item For logistic regression, the label for an example with feature vector $\va$ is drawn from a Bernoulli distribution with mean $\frac{1}{1+\exp( - \va^\tee \betastar )}$.
  \item For linear regression with non-normal errors \cite{mineo2005software,zeckhauser1970linear}, the label for an example with feature vector $\va$ is drawn from a generalized symmetric normal distribution with parameters $\alpha = .1$ and $\beta = 4$.
\end{itemize}
For all three problems, the loss is the negative log-likelihood.  We use $d = 100$ features and $n = 10000$ training examples.

For linear regression with non-normal errors, the loss is $f(\vx) = \sum_{i=1}^n (\mA_i^\tee \vx - \vb_i)^\beta$, where $\mA$ is the feature matrix and $\vb$ is the label vector.  Because we set $\beta = 4$, the loss grows as a quartic function of the estimation error (as in the one-dimensional example in \S\ref{sec:one_d_problems}).

It is worth noting that, for least squares linear regression, the quadratic upper bound that \saferate uses to compute a safe learning rate is tight, and the \saferate learning rate is therefore the rate that maximally reduces the loss on each step.  Thus, applied to least squares linear regression, SafeRate[Adam] can be thought of as a variant of Adam that at each step moves as far as possible in the Adam direction, stopping at the point where further movement would increase the loss.  In contrast, when applied to logistic regression or linear regression with non-normal errors, \saferate will always choose a learning rate that reduces the loss, but this rate will be smaller than the one that reduces the loss maximally.

\ignore{
Least squares linear regression corresponds to maximum likelihood estimation under the assumption of normally-distributed error terms.  Although this is by far the most commonly-used form of linear regression in practice, other distributions for the error terms are possible.  In particular, several authors have considered linear regression with error terms distributed according to a \emph{generalized symmetric normal distribution}  (a.k.a.\ exponential power distribution) \cite{mineo2005software,zeckhauser1970linear}, so that in one dimension, $b_i = a_i x + \epsilon_i$ with probability density:
\begin{equation}
  g(\epsilon_i) \propto \exp \paren {- \frac {| \epsilon_i |^\beta} {\alpha^\beta} }
\end{equation}
for $\alpha, \beta \in \reals$.
For fixed $\alpha$ and $\beta$, the negative log-likelihood therefore has the form:
\begin{equation}
  f(x) = C + \sum_{i=1}^n |a_i x - b_i|^\beta
\end{equation}
where $C \in \reals$ is a constant whose value does not affect the optimization problem.

The case $\beta > 2$ is interesting from an optimization theory perspective, because although $f$ is convex, it grows faster than quadratically, which can pose a challenge for optimization algorithms whose theory assumes sub-quadratic losses.  
}

Figure~\ref{fig:reg_comparison} compares SafeRate[GD], SafeRate[AdaGrad], and SafeRate[Adam] to tuned versions of the baseline optimizers given in Table~\ref{tab:optimizers}.  As in our previous experiment, we show only the best-performing hyperparameter value for each baseline optimizer.  The plots on the left use the number of steps as the horizontal axis, while the plots on the right use wall time.

Examining Figure~\ref{fig:reg_comparison}, we note that:
\begin{itemize}
  \item For all three problems, SafeRate[Adam] outperforms all other optimizers in terms of the loss reached after a given number of steps.  For linear regression with non-normal errors, it is also better in terms of wall time, while for the other two problems it is slightly worse in terms of wall time.
  \item For the two linear regression problems, SafeRate[GD] outperforms the best fixed GD learning rate by a wide margin.
  \item SafeRate[AdaGrad] consistently outperforms all fixed AdaGrad learning rates early in optimization (see also Figure~\ref{fig:reg_saferate_vs_grid}), but a well-tuned AdaGrad learning rate performs slightly better asymptotically.   For linear regression, this implies that greedily choosing the learning rate that maximally reduces the loss (as SafeRate[AdaGrad] does) does not produce the best loss asymptotically (although it comes close).
\end{itemize}

For all three problems, \saferate requires three matrix-vector products per step, whereas each step of GD, Adam, and AdaGrad requires only two.  For this reason, we would expect SafeRate[Adam] to take about 1.5 times as long as Adam to complete a fixed number of steps.  Empirically, however, it takes roughly twice as long, which may point to suboptimality of the generated computation graph for \saferate (and an opportunity to improve the results).

\begin{figure}[H]
\begin{center}
\includegraphics[width=0.9\linewidth]{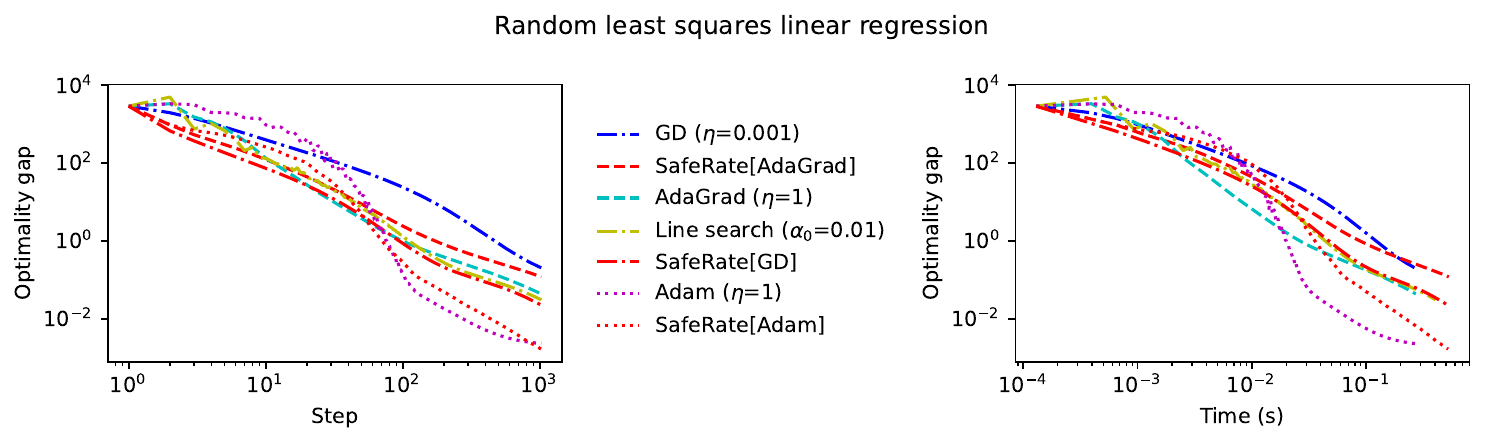}
\\ \vspace{.4cm}
\includegraphics[width=0.9\linewidth]{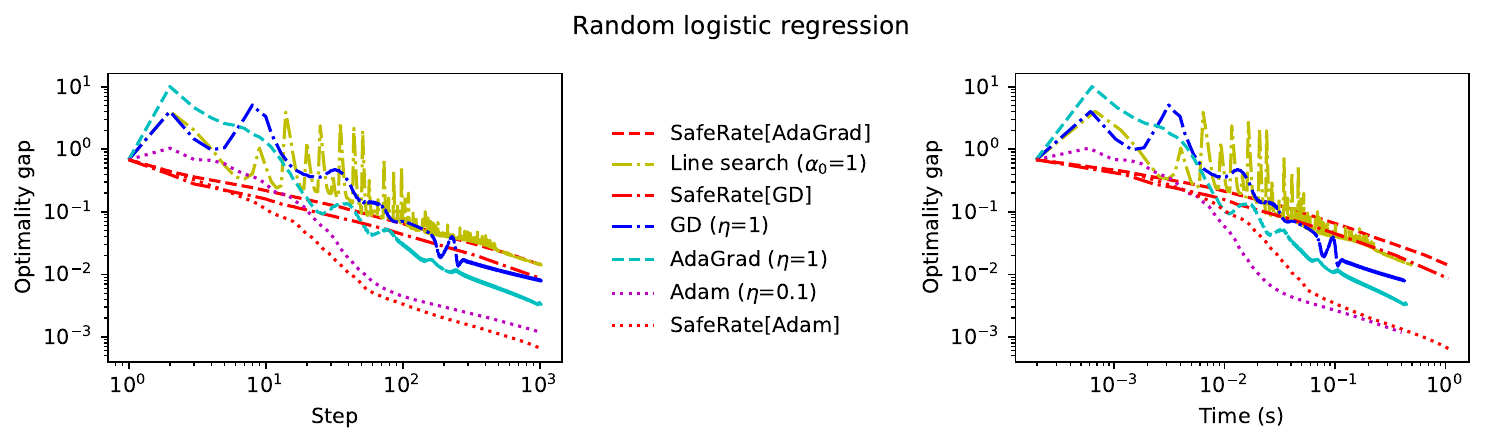}
\\ \vspace{.4cm}
\includegraphics[width=0.9\linewidth]{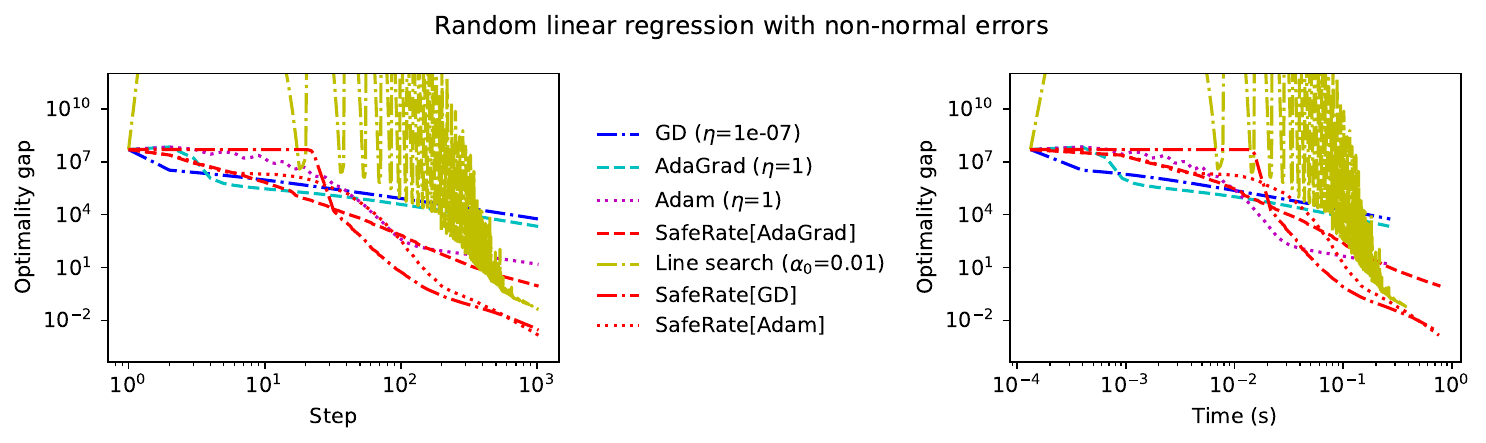}
\end{center}
\caption{Comparison of optimizers on randomly-generated linear and logistic regression problems.  Each plot shows the loss as a function of the number of iterations (left) or wall time (right), on a log scale.  For optimizers other than \saferate, the plot shows the best-performing hyperparameter from the grid defined in Table~\ref{tab:optimizers}.
Despite not requiring hyperparameter tuning, SafeRate typically performs about as well as the best learning rate in our grid, and sometimes outperforms it dramatically.}
\label{fig:reg_comparison}
\end{figure}

\begin{figure}[H]
\begin{center}
\includegraphics[width=0.3\linewidth]{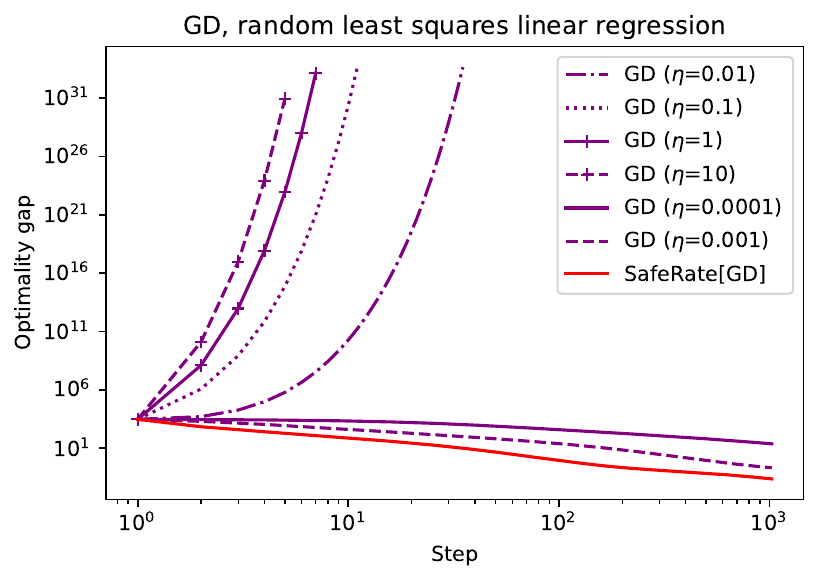}
\includegraphics[width=0.3\linewidth]{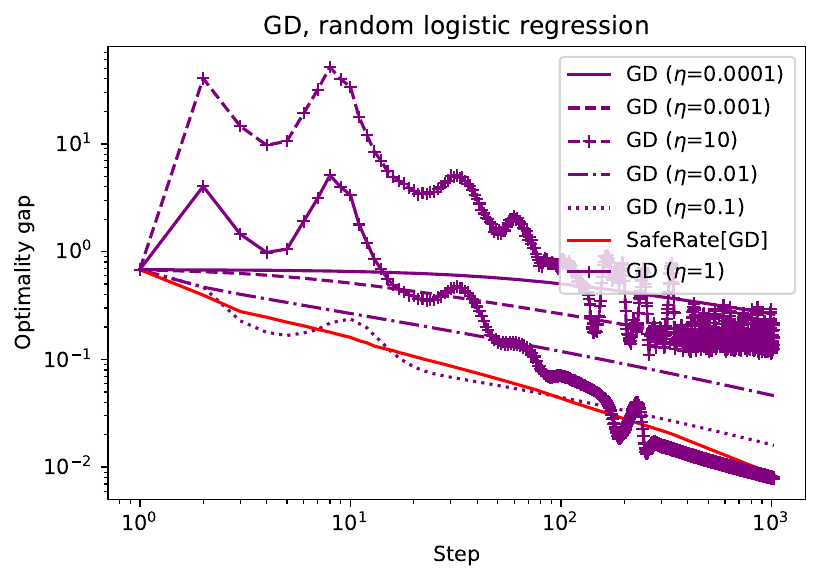}
\includegraphics[width=0.3\linewidth]{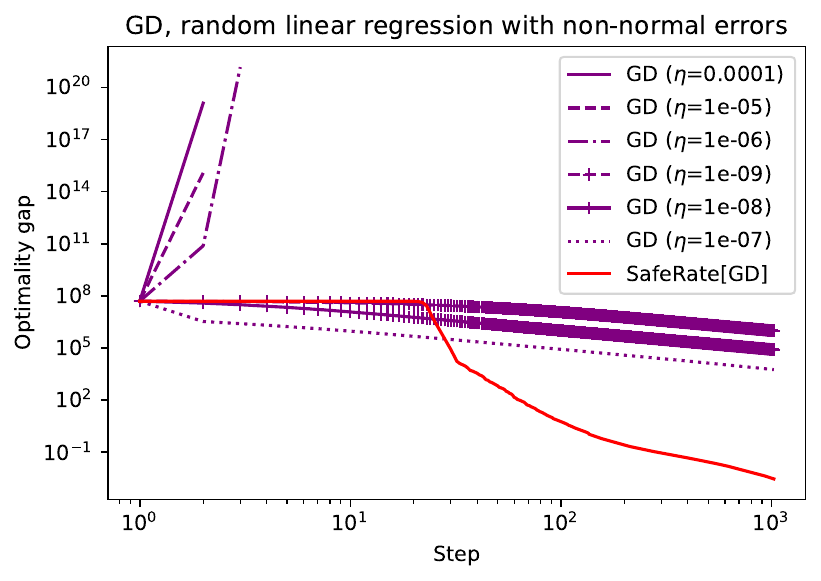}

\includegraphics[width=0.3\linewidth]{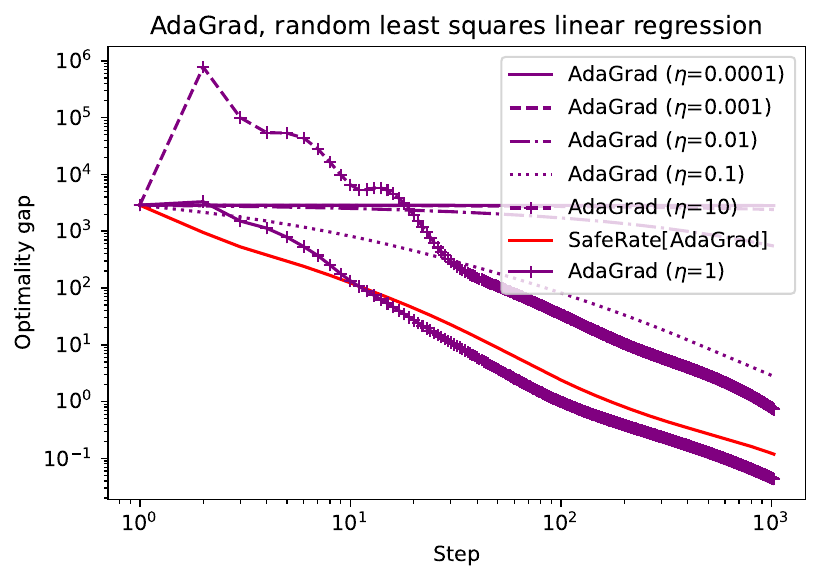}
\includegraphics[width=0.3\linewidth]{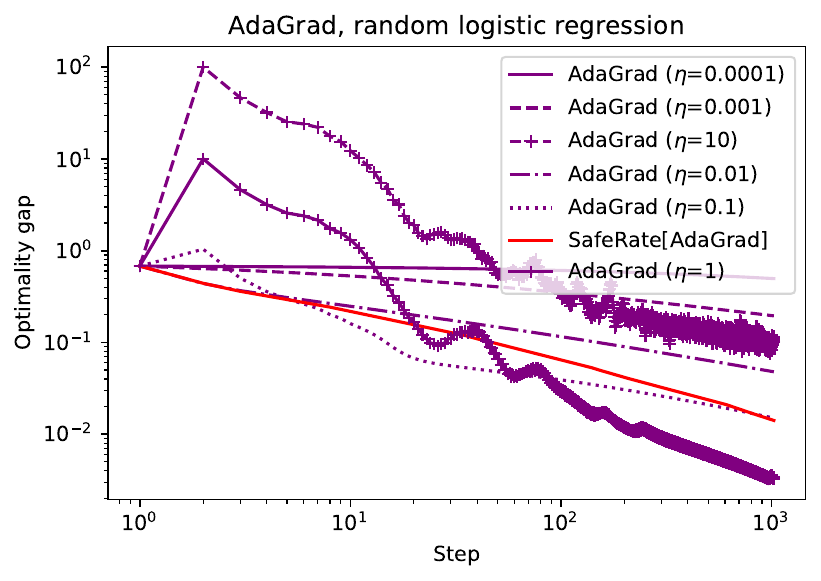}
\includegraphics[width=0.3\linewidth]{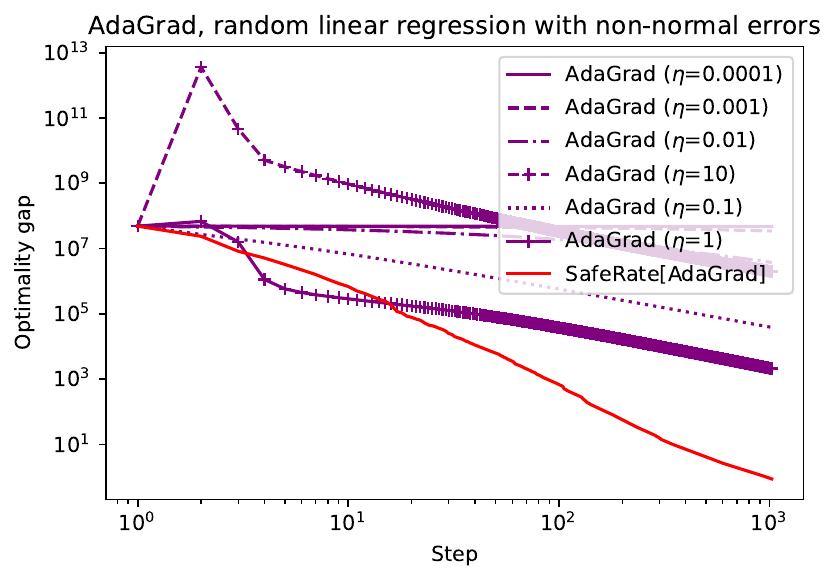}

\includegraphics[width=0.3\linewidth]{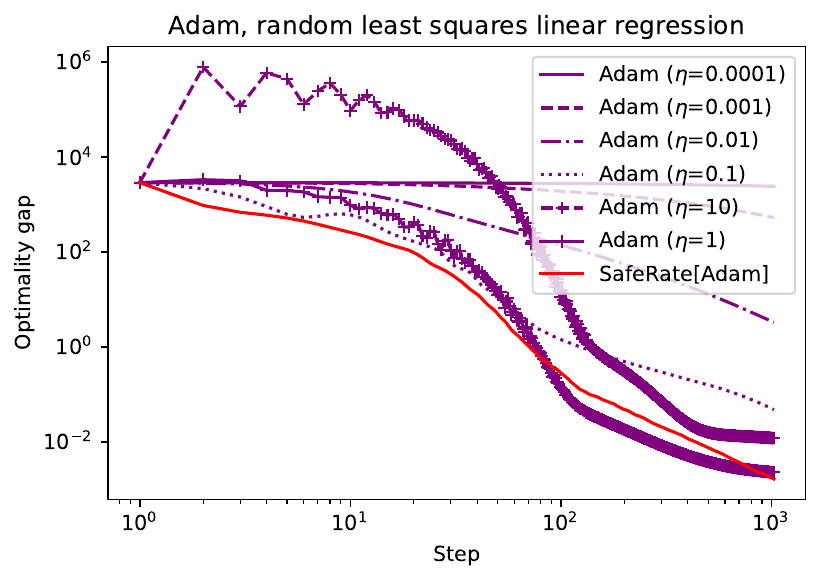}
\includegraphics[width=0.3\linewidth]{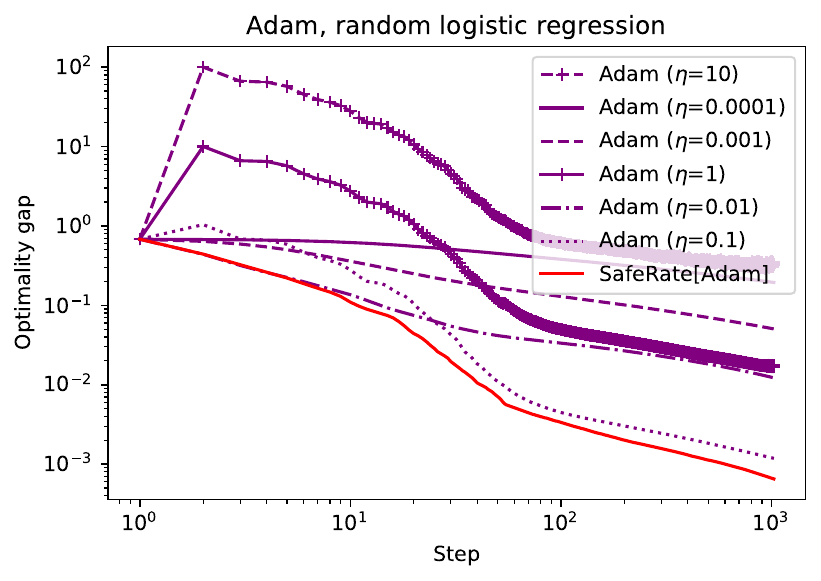}
\includegraphics[width=0.3\linewidth]{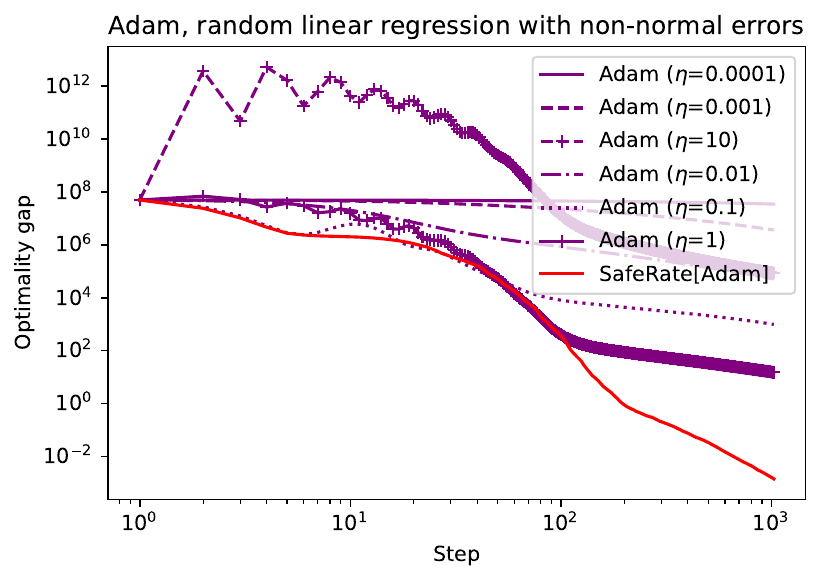}

\caption{Comparison of \saferate to optimizers with a learning rate hyperparameter, when used to solve randomly-generated linear and logistic regression problems.  
\emph{On these problems, SafeRate takes roughly twice as much wall time per step as the other algorithms (see Figure~\ref{fig:reg_comparison}).}}
\label{fig:reg_saferate_vs_grid}
\end{center}
\end{figure}

Figure~\ref{fig:reg_saferate_vs_grid} presents a more detailed view of these results.  In this figure, there are separate plots for GD, AdaGrad, and Adam that include results for all learning rates in the grid, plus results for the corresponding SafeRate algorithm.  Figure~\ref{fig:reg_saferate_vs_grid} makes it clear that, in addition to being competitive with the best learning rate in the grid, \saferate dramatically outperforms suboptimal learning rates, some of which lead to divergence or very slow progress.

\subsection{Multi-Layer Perceptrons} \label{sec:mlps}

For our final set of experiments, we use \saferate and \safecombination to train deep networks to classify images.

Specifically, we train a fully connected network with one or two hidden layers to classify images from the MNIST dataset.  We use 1000 hidden units, the $\mathrm{softplus}$ activation function, and the square loss.  Following the recommendation of \cite{hui2021evaluation}, we do \emph{not} use a final softmax layer when computing the square loss.  We train in the full-batch setting, using the first 1000 images as our training set.  To work around numerical issues in the \autobound algorithm, we use {\tt  float64} for the two-hidden-layer experiments with \saferate and \safecombination, and to keep the comparison fair we also use {\tt  float64} for the baseline optimizers.

As in our previous experiment, we evaluate \saferate using three different choices of update direction, namely the directions given by GD, AdaGrad, and Adam (based on the observed sequence of gradients, as described in \S\ref{sec:optimizers}).  Additionally, we evaluate SafeCombination[per-layer GD], SafeCombination[per-layer AdaGrad], and SafeCombination[per-layer Adam], which compute adaptive per-layer learning rates using the update directions given by GD, AdaGrad, and Adam, respectively.

\begin{figure}[H]
\begin{center}
\includegraphics[width=0.9\linewidth]{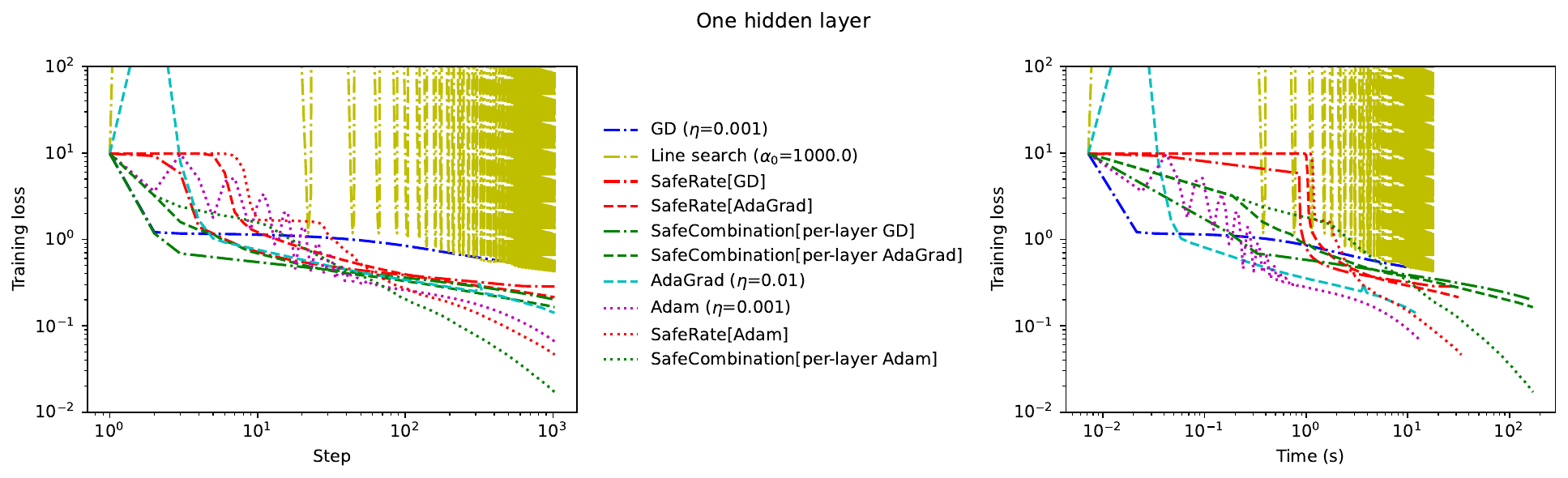}
\\ \vspace{.4cm}
\includegraphics[width=0.9\linewidth]{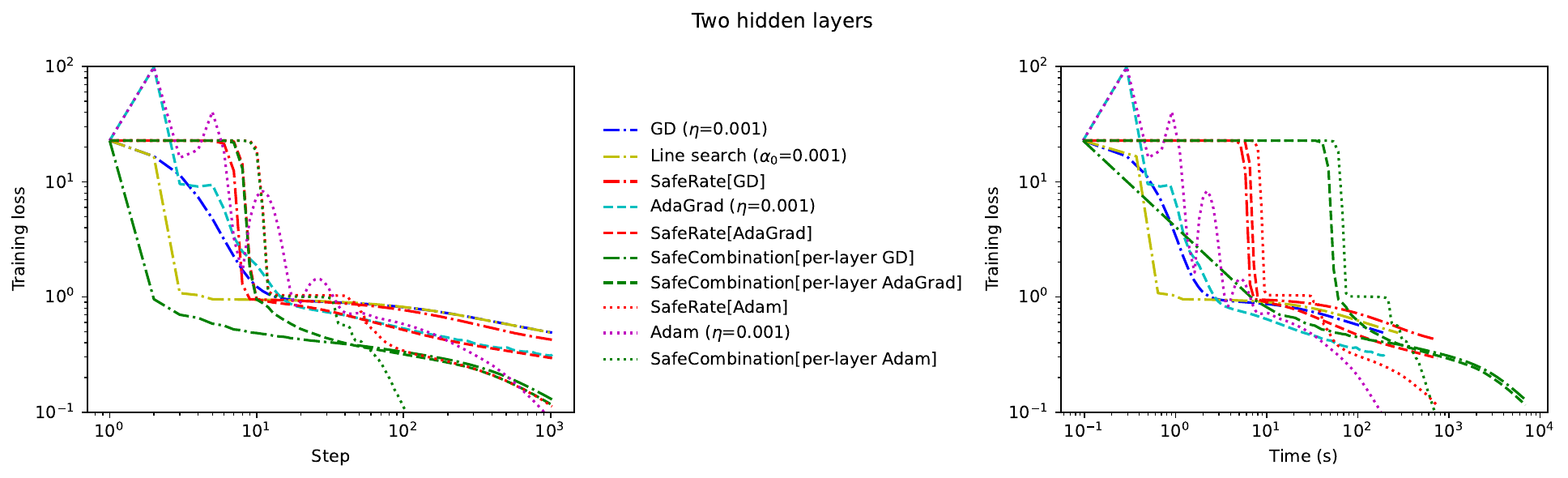}
\end{center}
\caption{Comparison of optimizers, training a multi-layer perceptron on a subset of the MNIST dataset.  Each plot shows the loss as a function of the number of iterations (left) or wall time (right), on a log scale.  For optimizers other than \saferate and \safecombination, the plot shows the best-performing hyperparameter from the grid defined in Table~\ref{tab:optimizers}.}
\label{fig:mnist_comparison}
\end{figure}

Figure~\ref{fig:mnist_comparison} shows the training loss reached by each optimizer as a function of the number of iterations, and as a function of wall time.  We note that:
\begin{itemize}
  \item For the one-hidden-layer problem, SafeRate[Adam] reaches lower training loss than all the baseline optimizers (after 1024 steps).  However, it requires about 2.5x as much wall time per step as Adam, and thus is somewhat worse in terms of training loss vs.\ wall time.
  \item SafeCombination consistently makes more progress per step than SafeRate, at the cost of additional computation.
  \item Using our current implementation, \ref{alg:safecombination} is very slow, requiring roughly 13x as much wall time per step as Adam on the one-hidden-layer problem, and roughly 100x as much time per step on the two-hidden-layer problem.\footnote{Applied to a neural network with $H$ hidden layers, \ref{alg:safecombination} requires $O(d^3 H)$ time per step, where $d$ is the number of update directions.  For SafeCombination[per-layer-Adam], we have $d = H + 1$, and thus the time per step is $O(H^4)$.}  Thus, \ref{alg:safecombination} is not currently competitive with the baseline optimizers in terms of wall time.  These wall time numbers should be taken with a grain of salt, however, as we suspect they could be significantly reduced by optimizing our implementation.\footnote{In particular, the wall time can likely be improved by taking advantage of the sparsity of the update directions.}
\end{itemize}

Figure~\ref{fig:mnist_saferate_vs_grid} presents a more detailed view of the same results, with separate plots for GD, Adam, and AdaGrad, showing the performance of various learning rates along with that of the corresponding SafeRate and SafeCombination algorithms.  To reduce clutter, only learning rates between $10^{-4}$ and $0.1$ are shown; learning rates outside this range performed poorly on both problems.  For the one-hidden-layer network, \saferate is generally able to offer performance comparable to the best learning rate in our grid on a per-step basis.  The results are best for gradient descent, where SafeRate[GD] takes an order of magnitude fewer steps to reach the loss that GD reaches after 1024 steps with a tuned learning rate.

\begin{figure}[H]
\begin{center}
\includegraphics[width=0.4\linewidth]{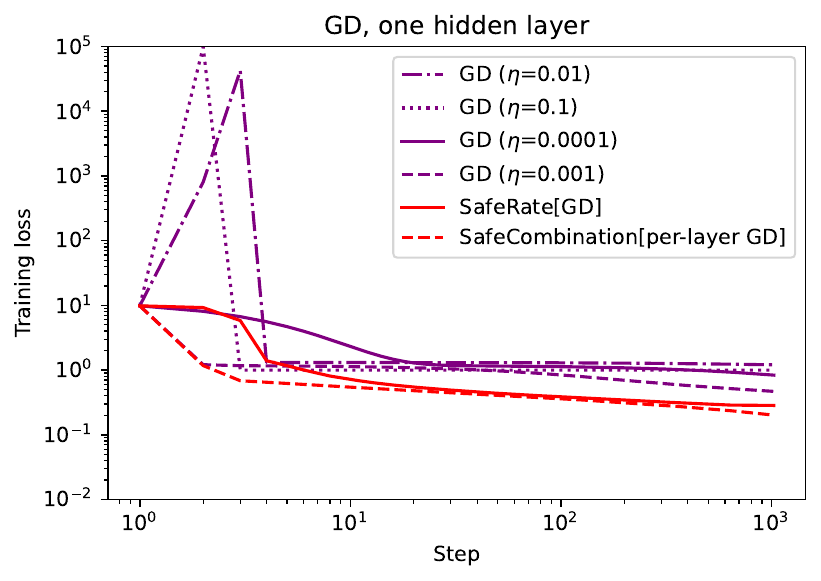}
\includegraphics[width=0.4\linewidth]{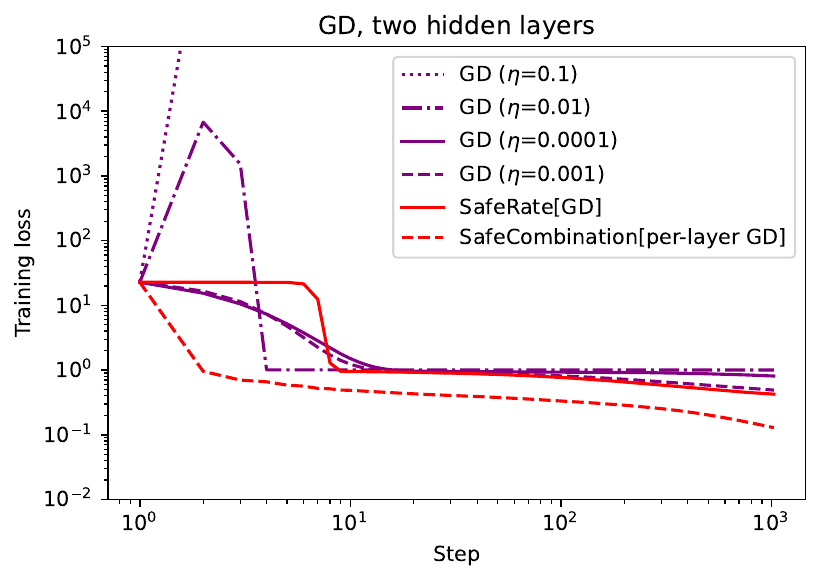}
\includegraphics[width=0.4\linewidth]{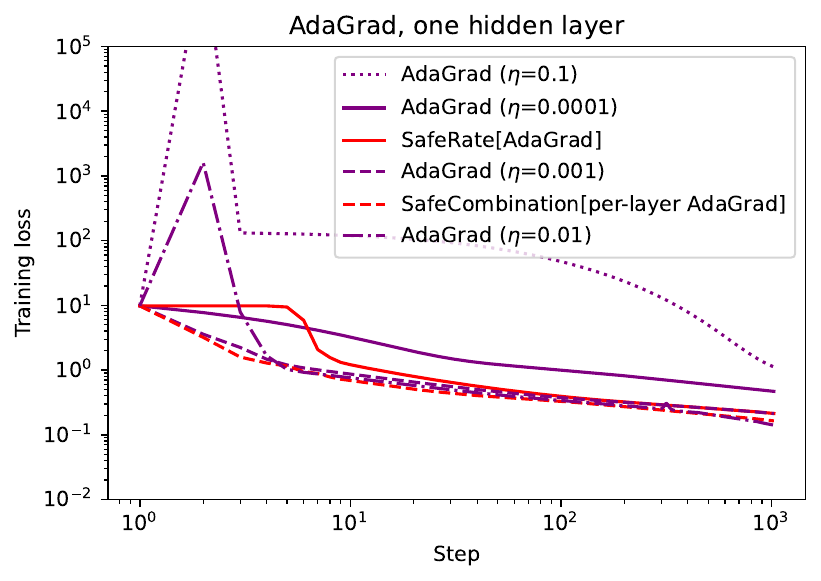}
\includegraphics[width=0.4\linewidth]{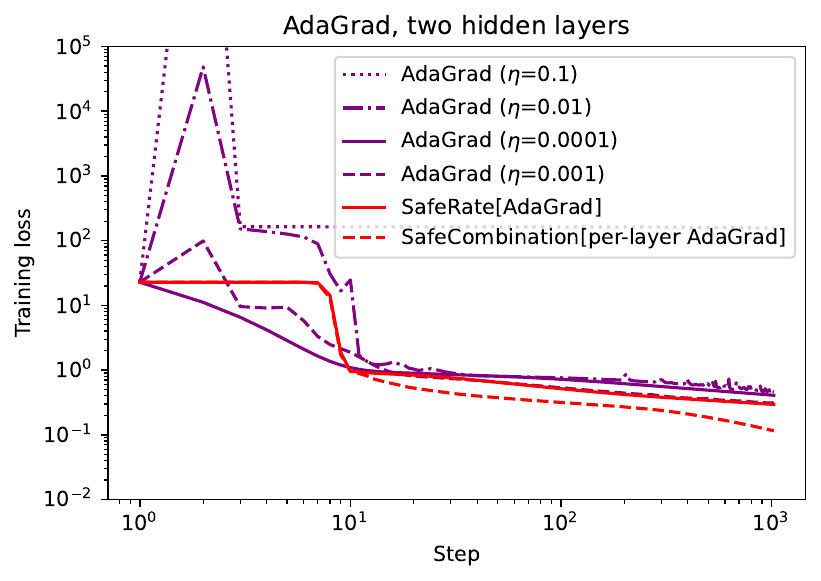}
\includegraphics[width=0.4\linewidth]{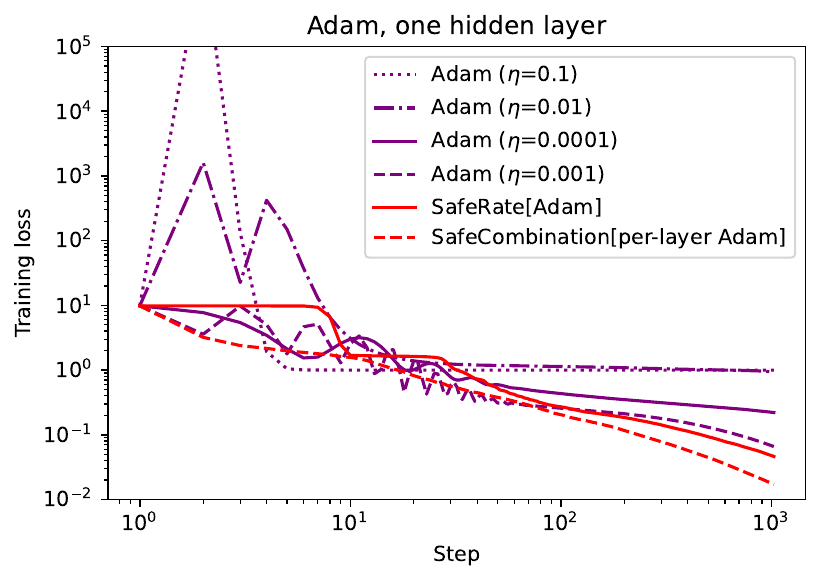}
\includegraphics[width=0.4\linewidth]{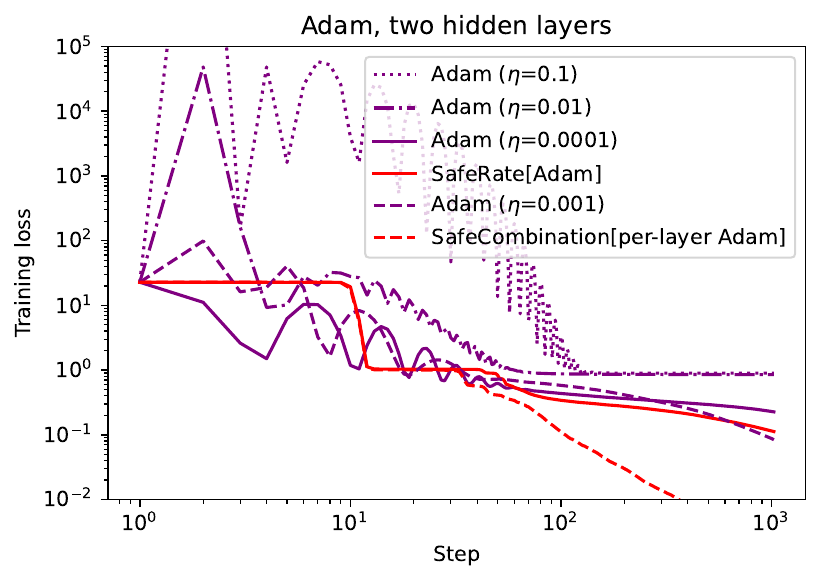}
\caption{Comparison of \saferate and \safecombination to optimizers with a learning rate hyperparameter, when used to optimize a multi-layer perceptron on a subset of the MNIST dataset.
 \safecombination makes more progress per step than \saferate, but also requires significantly more computation per step.
\emph{Using our current implementation, SafeCombination[per-layer Adam] requires 13x more wall time per step than Adam for the one-hidden-layer problem, and 103x more wall time per step for the two-hidden-layer problem (see Figure~\ref{fig:mnist_comparison}).}
}
\label{fig:mnist_saferate_vs_grid}
\end{center}
\end{figure}

Remarkably, we will see in \S\ref{sec:tightness} that for these neural networks, the quadratic bounds used by SafeRate[GD] are nearly tight.  Thus, on these problem, SafeRate[GD] greedily chooses a learning rate that is very close to the one that maximally reduces the loss on each step (similar to its behavior in the linear regression experiment).

\subsubsection{Training an Overparameterized Neural Network in One Step}

In \S\ref{sec:tightness}, we will see that the quadratic bounds used by SafeRate[GD] are \emph{empirically tight} for single-hidden-layer networks with squared error loss: the upper and lower bounds are nearly identical, with the actual loss sandwiched in between them.
Though we have not yet analyzed this phenomenon formally, we would expect the bounds to be tight when the trust region is small enough that the activations are near-affine as a function of the learning rate, for all learning rates in the trust region.  At the same time, theory suggests that gradient descent can train sufficiently wide neural networks without ever departing from a small region of parameter space where the activations are near-affine \cite{lee2019wide}.  Taken together, these observations suggest that \saferate may become very efficient as the width of the network increases.

As a partial confirmation of this conjecture, we now show that there exist (contrived) neural network optimization problems that require hundreds of steps to solve using the Adam optimizer, but that \safecombination is able to solve after just \emph{one step}.  To show this, we consider the same setup as in the previous experiment, but with the size of the training set reduced from 1000 examples to just \emph{one example}, and the number of hidden units increased from 1000 to $10^5$ (and a single hidden layer).  Figure~\ref{fig:highly_overparameterized} compares the performance of \saferate and \safecombination to various baseline optimizers, again considering the best hyperparameter value for each baseline optimizer.\footnote{For this experiment, the baseline optimizers benefit from lower learning rates, so we used a larger grid than the one given in Table~\ref{tab:optimizers}.}

\begin{figure}[H]
\begin{center}
\includegraphics[width=0.9\linewidth]{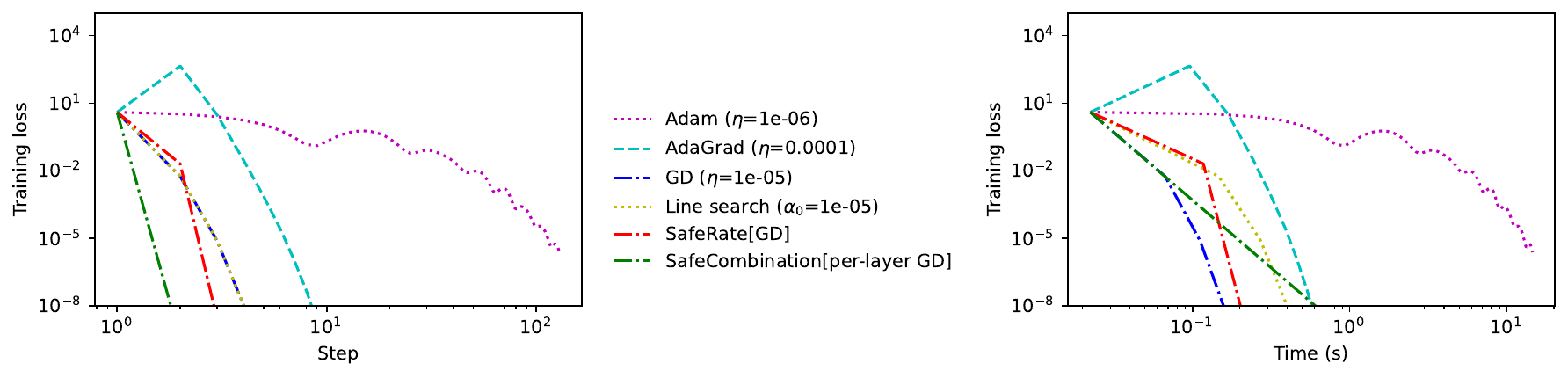}
\caption{Comparison of optimizers on a highly overparameterized problem with only a single training example.  \safecombination is able to solve the problem in just one step, whereas Adam requires hundreds of steps, even using the best learning rate from a coarse grid.}
\label{fig:highly_overparameterized}
\end{center}
\end{figure}

We note that, with a single training example, the optimization problem becomes trivial, and one could imagine other methods that would reach a global minimum in one step.  Nevertheless, the fact that our universal MM optimizers recover good performance in this extreme case is encouraging, and suggests theoretical analysis of their behavior in the infinite-width limit as an interesting area of future work.

\subsection{Tightness of Automatically-Derived Majorizers} \label{sec:tightness}

The effectiveness of any MM optimization algorithm depends on the tightness of the majorizers (i.e., how close the upper bound is to the actual loss).
For least-squares linear regression, it can be shown that the upper bounds used by \saferate are exact.  For more complex losses, the tightness of the upper bounds must be measured empirically.

Figure~\ref{fig:mnist_majorizers} plot the majorizers used by the first iteration of \saferate for the MNIST classification problem discussed in \S\ref{sec:mlps}, for multi-layer perceptrons with 1, 4, 16, or 64 hidden layers.  Each plot shows quadratic, cubic, and quartic majorizers.

With a single hidden layer (top plot), even the quadratic upper bounds are empirically tight.  This is perhaps surprising considering that the loss itself is not quadratic, due to the $\mathrm{softplus}$ hidden layer.  However, the function $h_t(\eta) = f(\vx_t - \eta \nabla f(\vx_t))$ is empirically very close to quadratic for relevant values of $\eta$ (those that are small enough to reduce the loss).

As the number of hidden layers increases, the quadratic bounds become looser, but this can be compensated for by increasing the polynomial degree.  Even with 64 hidden layers, the quartic polynomial majorizer is nearly tight.  Thus, even for very deep networks, \saferate is able to compute a learning rate that near-maximally reduces the loss, at the cost of only a single additional forward pass.

\begin{figure}
\begin{center}
\includegraphics[width=0.45\linewidth]{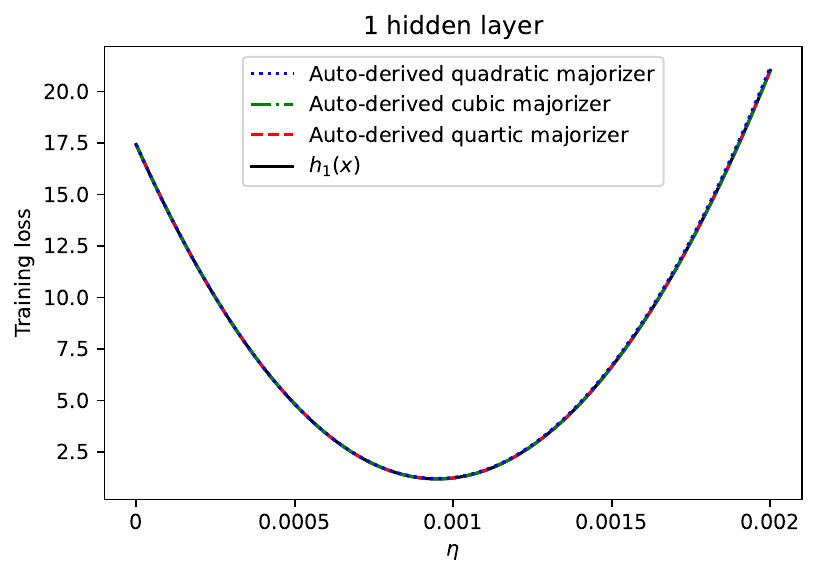}
\includegraphics[width=0.45\linewidth]{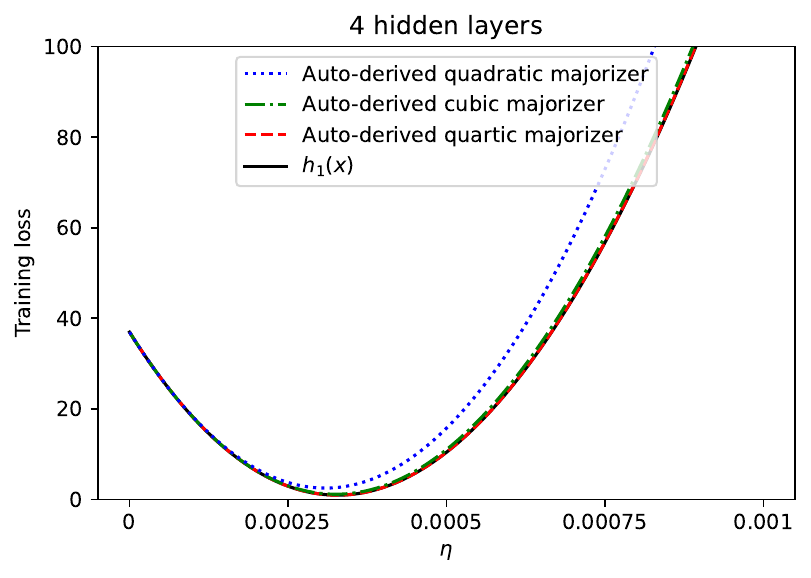}
\includegraphics[width=0.45\linewidth]{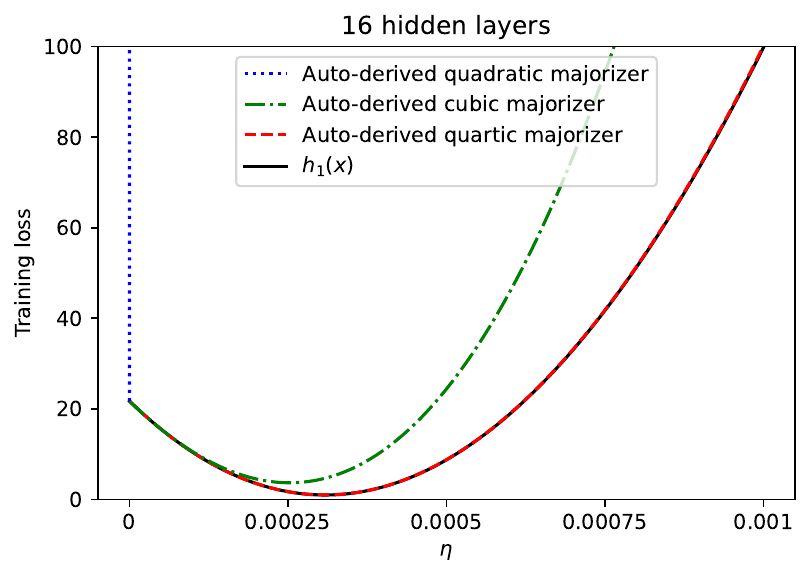}
\includegraphics[width=0.45\linewidth]{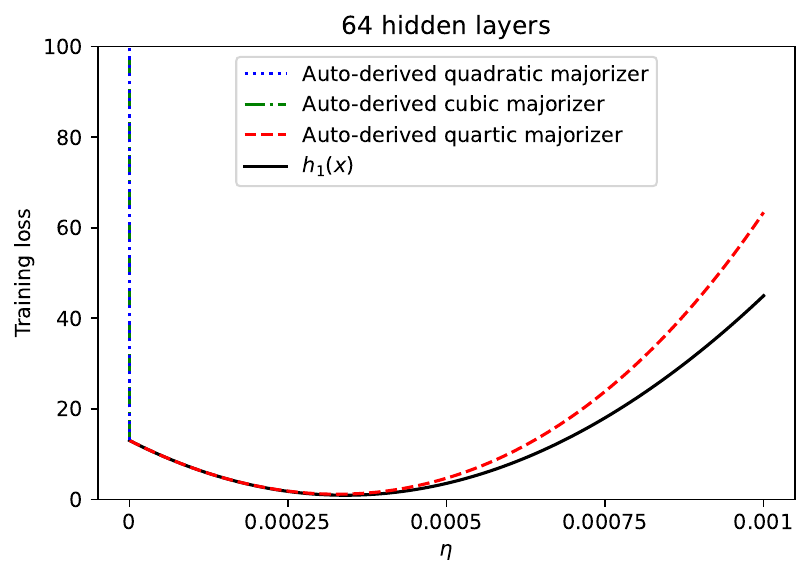}
\end{center}
\caption{Automatically-derived majorizers for the function $h_1(\eta) = f(\vx_1 - \eta \nabla f(\vx_1))$, where $f$ is the training loss for a multi-layer perceptron on a subset of
 the MNIST dataset.}
\label{fig:mnist_majorizers}
\end{figure}

Figure~\ref{fig:autobound_vs_ia_mnist} compares the quadratic majorizers derived by \autobound to quadratic majorizers derived from a bound on the range of the second derivative.  Recall from \S\ref{sec:bounding_remainder} that by bounding the range of the second derivative of a function over a given trust region, we can obtain quadratic bounds on the function which hold over the trust region, and recall that the range of the derivative can be bounded by evaluating the derivative using interval arithmetic.  We consider a variant of this baseline method that handles bilinear operations efficiently using the interval bound propagation scheme described in \cite{streeter2023automatically} (which \autobound also uses).

With one hidden layer, both methods produce nearly tight majorizers.  However, as the number of hidden layers increases, the majorizers produced by the baseline method quickly become much looser than those produced by \autobound.  With three hidden layers, the majorizer produced by the baseline method yields a learning rate that is over 11 times smaller than the one obtained using the majorizer produced by \autobound.  With four hidden layers, the learning rate produced by the baseline method is over 2500 times smaller than the one obtained using \autobound.

\begin{figure}
\begin{center}
\includegraphics[width=0.45\linewidth]{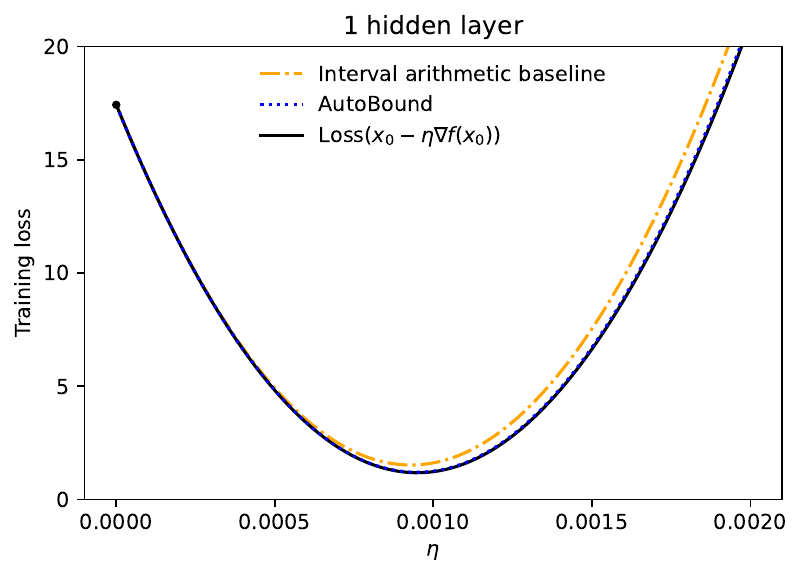}
\includegraphics[width=0.45\linewidth]{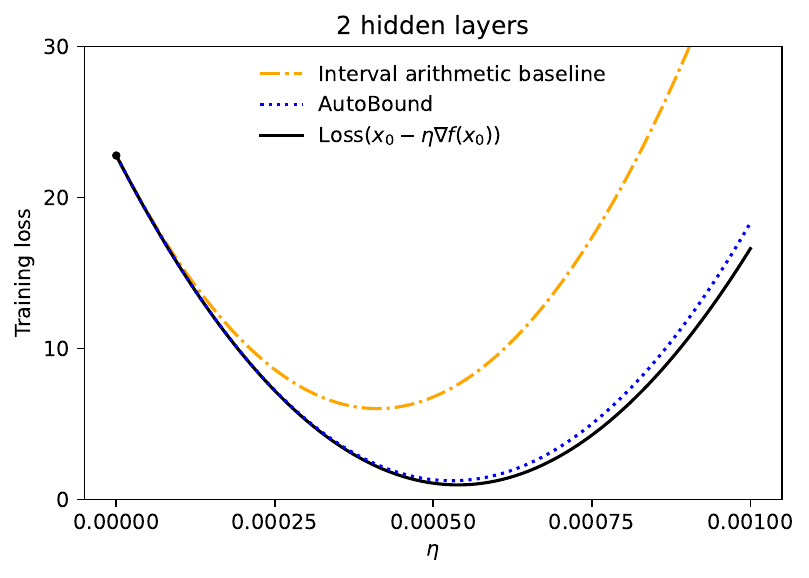}
\includegraphics[width=0.45\linewidth]{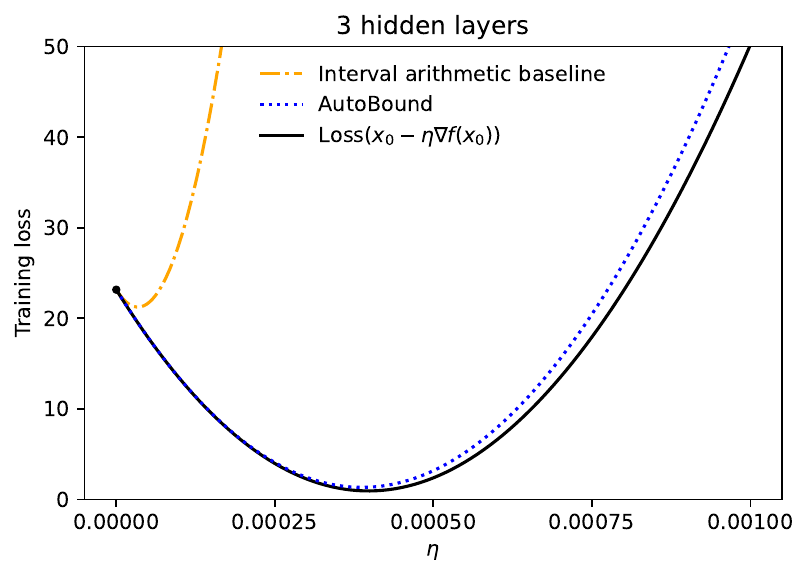}
\includegraphics[width=0.45\linewidth]{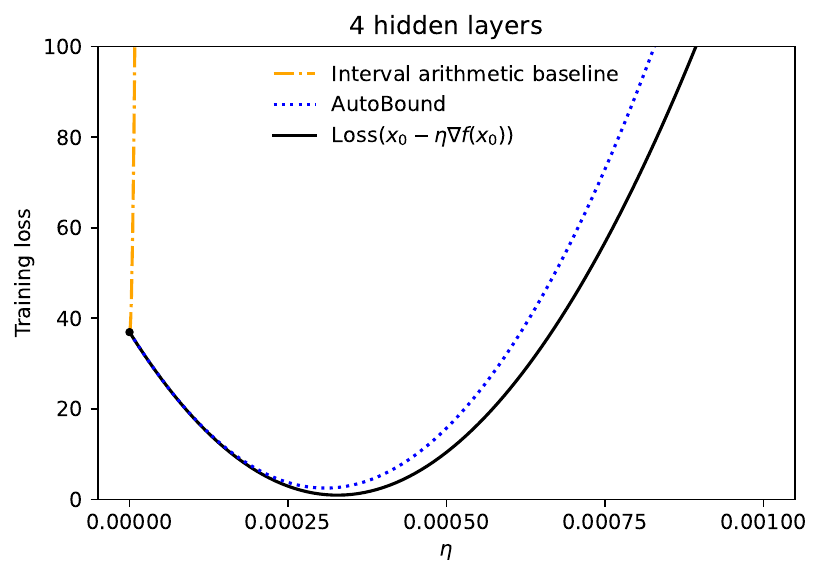}
\end{center}
\caption{Comparison of quadratic majorizers derived by \autobound to those derived using a baseline method based on range-bounding the second derivative.  Each plot shows majorizers for the function $h_1(\eta) = f(\vx_1 - \eta \nabla f(\vx_1))$, where $f$ is the training loss for a multi-layer perceptron on a subset of
 the MNIST dataset.}
\label{fig:autobound_vs_ia_mnist}
\end{figure}

\subsection{Summary}

In this section, we evaluated \saferate and \safecombination on a variety of full-batch optimization problems.  Some of our most significant observations are:
\begin{itemize}
  \item Applied to random linear and logistic regression problems, SafeRate is able to boost the performance of gradient descent and Adam while simultaneously eliminating the need to tune a learning rate hyperparameter.  For linear regression with non-normal errors, SafeRate can \emph{dramatically} outperform tuned versions of the baseline optimizers.
  \item There exist optimization problems where \saferate converges at a faster rate than gradient descent, Adam, or AdaGrad (in particular, applied to a  one-dimensional quartic, \saferate converges at a super-linear rate, whereas gradient descent, Adam, and AdaGrad appear to converge linearly).  It would be very interesting to characterize the space of such problems theoretically.
  \item For multi-layer perceptrons with squared error loss, the bounds computed by \saferate are \emph{empirically tight} (for up to 64 hidden layers, when using quartic bounds), and thus \saferate uses the learning rate that maximally reduces the loss at each step.  As a result, SafeRate can boost the performance of gradient descent while eliminating the learning rate hyperparameter, as was the case with linear regression.
\end{itemize}

We also saw that \safecombination can dramatically outperform \saferate on a per-step basis, at the cost of significant additional computation per step.  In the extreme case of a wide single-hidden-layer network and a single training example, \safecombination was able to reach a global minimum in \emph{one step}.  Our evaluation of \safecombination considered only a few possible choices for the matrix of update directions, and further exploration of this space remains a promising area of future work.

\section{Related Work} \label{sec:mm_related}

At a high level, this chapter considers the problem of minimizing a scalar-valued function $f: \reals^n \to \reals$, where the function $f$ is available in symbolic form, and is composed of sub-differentiable elementary functions.  This problem has been studied in the applied mathematics community for at least 80 years and is the subject of a vast literature; see the textbook by Nocedal and Wright \cite{nocedal1999numerical} for an introduction.

Most relevant to our work is the literature on majorization-minimization (MM) optimizers, which iteratively reduce the loss by minimizing a locally-tight upper bound (called a majorizer).  MM is itself the subject of a large literature; see \cite{hunter2004tutorial} for a tutorial, and see the textbooks by Lange \cite{lange2016mm} and de Leeuw \cite{de2016block} for a through introduction.  Majorizers have been derived by hand for many specific problems of interest, including 
logistic regression \cite{bohning1988monotonicity}, quantile regression \cite{hunter2000quantile}, multidimensional scaling \cite{de1988convergence,groenen1995majorization,de2009multidimensional}  generalized Bradley-Terry models \cite{hunter2004mm}, and support vector machines \cite{groenen2008svm}.

However, there seems to be very little work on deriving the majorizer automatically via symbolic computation, as we have done.  The first work we are aware of is \cite{tran2015fast}, which derives a quadratic majorizer using a recursive procedure that can be applied to neural networks of any depth.  However, the approach taken by \cite{tran2015fast} differs from ours in several critical ways:
\begin{enumerate}
  \item The algorithm of \cite{tran2015fast} seeks a quadratic majorizer that is valid everywhere (not just over a specified trust region), and hence is not applicable to losses such as $f(x) = (x - 3)^4$, which grow faster than any quadratic.  As a consequence, the algorithm of \cite{tran2015fast} can only derive a quadratic majorizer for a neural network loss when the weights in all but one layer are held constant.
  \item The algorithm of \cite{tran2015fast} propagates bounds from the output of the network toward the input, analogous to reverse-mode automatic differentiation.  Such an algorithm has the virtue of efficiently computing majorizers that depend on a large number of input variables.  In contrast, \autobound uses memory linear in the number of input variables, making it only practical for majorizers that are a function of some lower-dimensional quantity such as the learning rate.  However, the efficiency of the reverse-mode algorithm comes at a price, as the backward propagation of quadratic bounds requires the use of inequalities that can become very loose as the network grows wide.  %
  \item As presented, the algorithm of \cite{tran2015fast} is specific to training a neural network with squared error loss and hyperbolic tangent activation functions, although we believe it could be generalized to other activation functions.  %
\end{enumerate}  

As discussed in \cite{streeter2023automatically}, we plan to develop a reverse-mode variant of \autobound in the future, which could be used to derive majorizers that are more directly comparable to the ones derived by \cite{tran2015fast}.

More recently, \cite{bernstein2023automatic} presented a hyperparameter-free gradient descent algorithm for training deep neural networks, based on majorization-minimization ideas.  This work was appeared after the initial publication of our work\footnote{Our work first appeared as Chapter 5 of version 1 of \cite{streeter2023automatically}, which was posted to arXiv in December 2022, while \cite{bernstein2023automatic} was posted to arXiv in April 2023.}, and was done independently.  Unlike our work, this work is based on a number of approximations that do not yield true majorizers.  It also applies to a narrower class of losses, and is not \emph{universal} in the sense we have described.  However, it demonstrates that a hyperparameter-free optimizer based on majorization-minimization ideas can successfully train deep neural networks at ImageNet scale.

\section{Future Work} \label{sec:mm_future}

In this paper, we have presented universal MM optimization algorithms built on top of \autobound \cite{streeter2023automatically}.  Though we believe our experiments have shown that these algorithms exhibit qualitatively new (and desirable) behavior, we have not done all we can to turn them into practical general-purpose optimizers.

Promising areas of future work include:
\begin{itemize}
  \item \emph{Mini-batch optimization.}  Both the theory and experiments in this chapter have been limited to full-batch optimization.  However, the MM paradigm can be extended to mini-batch optimization (e.g., \cite{mairal2013stochastic}).  A universal mini-batch MM optimization algorithm may outperform Adam and AdaGrad on certain large-scale machine learning problems.
  \item \emph{Approximate majorizers.}  Currently, both \saferate and \safecombination require more wall time per step that Adam or AdaGrad.  However, the wall time can be made comparable to Adam or AdaGrad by computing the majorizer approximately, using a random subset of the training data.  Preliminary experiments show that this can yield better tradeoffs between loss and wall time early in optimization, but some care is required to ensure convergence to the same loss asymptotically.
\end{itemize}

Additionally, the universal MM optimization algorithms presented in this chapter have been limited to majorizing the loss as a function of a small number of learning rates.  This was necessary because \autobound, like forward-mode automatic differentiation, requires memory linear in the number of inputs.  However, as discussed in \cite{streeter2023automatically}, we plan to develop a reverse-mode variant of \autobound that would allow us to efficiently compute quadratic majorizers for a function $f: \reals^n \to \reals$ as a function of the $n$-dimensional input.  This would allow us to define universal MM optimization algorithms that determine the update direction on their own, rather than taking it as a given (as \saferate does).

\bibliographystyle{plainnat}
\bibliography{umm}

\begin{appendices}

\section{Proofs} \label {sec:proofs}

We now provide proofs for the theorems stated in \S\ref{sec:theory}.

We first prove Theorem~\ref{thm:global_convergence}, which shows that as $T \rightarrow \infty$, \saferate and \safecombination are guaranteed to find a point with vanishingly small gradient norm.

\begin{customthm}{1}
\thmglobalconvergence
\end{customthm}
\begin{proof}
\thmglobalconvergenceproof
\end{proof}

We now prove Theorem~\ref{thm:time_complexity}
\begin{customthm}{2}
\thmtimecomplexity
\end{customthm}
\begin{proof}
\thmtimecomplexityproof
\end{proof}

\section{Implementation Details} \label{sec:implementation_details}

We now provide details about the \saferate and \safecombination algorithms that were omitted from the main text.  We first describe how to derive majorizers for multivariate functions, which is necessary for the direct approach mentioned in \S\ref{sec:direct_approach}, and also for \safecombination.

\subsection{Automatically Deriving Multivariate Majorizers}

To describe how the AutoBound algorithm can be used to compute majorizers for multivariate functions, we first need to introduce some additional notation.

\subsubsection{Preliminaries}

If $\mA$ is a tensor of rank $r$, and $\mB$ is a tensor of rank $q \le r$, the inner product $\inner{\mA}{\mB}$ is a tensor of rank $r - q$, whose elements are defined by
\begin{align}
  & \inner{\mA}{\mB}_{i_1, i_2, \ldots, i_{r-q}} \nonumber \\
  & \eqdef \sum_{(j_1, j_2, \ldots, j_q) \in \mathrm{indices}(\mB)} \mA_{i_1, i_2, \ldots, i_{r-q}, j_1, j_2, \ldots, j_q}  \mB_{j_1, j_2, \ldots, j_q}. \label{eq:inner}
\end{align}
Observe that if $\a$ and $\b$ are vectors, $\inner{\a}{\b}$ is the usual dot product, while if $\mA$ is a matrix and $\b$ is a vector, then $\inner{\mA}{\b}$ is the usual matrix-vector product.

If $\mA$ is a tensor of rank $r$, and $\mB$ is a tensor of rank $s$, the outer product $\mA \otimes \mB$ is a tensor of rank $r + s$, whose elements are defined by
\begin{equation} \label{eq:outer}
  (\mA \otimes \mB)_{i_1, i_2, \ldots, i_r, j_1, j_2, \ldots j_s} = \mA_{i_1, i_2, \ldots, i_r} \mB_{j_1, j_2, \ldots, j_s}.
\end{equation}
For an integer $k \ge 0$, we use $\mA^{\otimes k}$ to denote a repeated outer product: for $k > 0$, $\mA^{\otimes k} \eqdef \mA \otimes \mA^{\otimes k-1}$, while $\mA^{\otimes 0} \eqdef 1$.  Observe that if $\mA$ has rank $r$, then $\mA^{\otimes k}$ has rank $k r$.

\begin{definition} [Tensor interval]
For tensors $\mA, \mB \in \reals^{n_1 \times n_2 \times \ldots \times n_k}$, the \emph{tensor interval} $[\mA, \mB]$ is the set of tensors $\set{\mX \in \reals^{n_1 \times n_2 \times \ldots \times n_k}: \mA \le \mX \le \mB}$, where the inequality is elementwise.
\end{definition}

Let us adopt the convention that indexing a tensor interval gives a scalar  interval.  For example if $\sA = [\lep{\sA}, \rep{\sA}]$ has rank $r$, then $\sA_{i_1, i_2, \ldots, i_r}$ denotes the scalar interval $[\lep{\sA}_{i_1, i_2, \ldots, i_r}, \rep{\sA}_{i_1, i_2, \ldots, i_r}]$.
With this indexing convention in place, the inner product definition \eqref{eq:inner} immediately generalizes to the case where one or both arguments are tensor intervals (rather than tensors).

Finally, for a vector-variate function $f: \reals^n \to \reals$, the $i$th derivative, written $\nabla^i f(\vx)$, is a tensor of rank $i$ whose elements are
\begin{equation}
  \nabla^i f(\vx)_{j_1, j_2, \ldots, j_i} \eqdef \frac { \partial^i f(\vx) } { \partial \vx_{j_1} \partial \vx_{j_2} \ldots \partial \vx_{j_i} }.
\end{equation}

\subsubsection{Multivariate Majorizers}

When applied to a vector-variate function $f: \reals^d \to \reals$, AutoBound takes as input a vector interval $[\va, \vb] \subseteq \reals^d$, a reference point $\vx_0 \in \reals^d$, and a polynomial degree $k$, and returns a tensor interval $\sI = [\lep{\sI}, \rep{\sI}]$ such that for all $\vx \in [\va, \vb]$,
\begin{equation}
  f(\vx) \in \paren { \sum_{i=0}^{k-1}  \inner{ \nabla^i f(\vx_0) } {(\vx - \vx_0)^{\otimes i} } } + \inner { \sI } { (\vx - \vx_0)^{\otimes k} }.
\end{equation}

If $\vx - \vx_0 \ge 0$ (elementwise) for all $\vx \in [\va, \vb]$, then this implies the upper bound:
\begin{equation}
  f(\vx) \le \paren { \sum_{i=0}^{k-1}  \inner{ \nabla^i f(\vx_0) } {(\vx - \vx_0)^{\otimes i} } } + \inner { \rep{\sI} } { (\vx - \vx_0)^{\otimes k} }.
\end{equation}
where $\rep{I}$ is the right endpoint of $\sI$.

\subsection{Computing the \saferate Learning Rate}

As described in \S\ref{sec:saferate}, the \saferate algorithm computes a learning rate $\eta_t$ using the formula
\begin{equation}
  \eta_t = \argmin_{\eta \in [0, \etamax_t]} \set { P_t(\eta) }
\end{equation}
where $P_t$ is a degree $k$ polynomial.

In the special case $k = 2$, letting $\rep{I_t}$ denote the quadratic coefficient of $P_t$, we have
\begin{equation} \label{eq:optimal_eta}
  \eta_t = \begin{cases}
  \max \set {0, \min \set { - \frac { \nabla f(\vx_t)^\tee \vv_t} {2 \rep{I_t}}, \etamax_t } } & \rep{I_t} > 0 \\
  \argmin_{\eta \in \set{0, \etamax_t}} \set { \eta \nabla f(\vx_t)^\tee \vv_t + \rep{I_t} \eta^2} & \rep{I_t} \le 0.
\end{cases}
\end{equation}
The solution in \eqref{eq:optimal_eta} for the case $\rep{I_t} > 0$ is obtained by minimizing the quadratic (by setting the derivative to 0) and projecting the minimizer onto the interval $[0, \etamax_t]$.  The solution for $\rep{I_t} \le 0$ holds because the minimizer of a concave quadratic over an interval is always one of the two endpoints.

More generally, the minimum value of a polynomial over a closed interval must either be one of the end points, or a point at which the derivative is zero.  To find the points at which the derivative is zero, we must find the roots of a degree $k - 1$ polynomial, which can be done by computing the eigenvalues of the companion matrix \cite{press2007numerical}.
We then obtain the argmin by minimizing $P_t$ over this finite set of points (the roots of the derivative, plus the two endpoints of the interval, namely $0$ and $\etamax_t$).

\subsection{Optimizing a Non-Convex Quadratic Over a Hyperrectangle} \label{sec:minimizing_quadratic}

In \S\ref{sec:safe_combination} we presented the \safecombination algorithm, each step of which requires optimizing a possibly non-convex quadratic function over a hyperrectangle.  Although this is in general NP-hard (as discussed below), we provide an efficient algorithm that maintains good performance in the case where the quadratic function is convex, while still offering a desirable performance guarantee in the general case.

\newcommand{\xmin}{\lep{\vec{x}}}
\newcommand{\xmax}{\rep{\vec{x}}}

For a vector $\vb \in \reals^d$, and a symmetric matrix $\mA \in \reals^{d \times d}$, let $f: \reals^d \to \reals$ be defined by
\begin{equation}
  f(\vx) \eqdef \frac 1 2 \vx^\tee \mA \vx - \vb^\tee \vx.
\end{equation}
For vectors $\xmin, \xmax \in \reals^d$, with $\xmin \le \xmax$ (elementwise), we wish to minimize $f$ over the hyperrectangle $[\xmin, \xmax]$.  That is, we wish to find
\begin{equation} \label {eq:xstar}
  \vx^* \eqdef \argmin_{\vx \in [\xmin, \xmax]} \set { f(\vx) }.
\end{equation}
If $\mA$ is arbitrary, this problem includes the maximum cut problem as a special case, as is therefore NP-hard \cite{de2008complexity}.
If $\mA$ is positive semidefinite, then $f(\vx)$ is convex, and $\vx^*$ can be found in polynomial time using a variety of convex optimization methods.  If $\mA$ is positive definite, then the global minimum of $f(\vx)$ can be found using $d$ steps of conjugate gradient descent.  If the global minimum happens to be inside the hyperrectangle $[\xmin, \xmax]$, then it must equal $\vx^*$.

We now describe a variant of conjugate gradient descent that always produces a solution inside the hyperrectangle $[\xmin, \xmax]$, while maintaining some desirable guarantees of conjugate gradient descent in the case where $\mA$ is positive definite.  To explain our method, recall that two vectors $\vu, \vv \in \reals^d$ are said to be \emph{conjugate} with respect to $\mA$ if
\begin{equation}
  \vu^\tee \mA \vv = 0.
\end{equation}
Like the conjugate gradient method, our algorithm will iteratively compute a set of conjugate directions $\set{\vp_1, \vp_2, \ldots, \vp_d}$, and will output a linear combination of the conjugate directions.  However, this linear combination will be constrained to lie within the hyperrectangle $[\xmin, \xmax]$.

Our algorithm is defined by the following equations:
\begin{align}
  \vx_0 & = \zeros_d \nonumber \\
  \vr_i & =  \nabla f(\vx_i) & \forall i \in \set{0, 1, \ldots, d} \nonumber \\
  \vp_i & = -\vr_i + \frac { \vr_i^\tee \vr_i } { \vr_{i-1}^\tee \vr_{i-1} } \vp_{i-1} & \forall i \in \set{1, 2, \ldots, d} \nonumber \\
  \alpha_i & = \argmin_{\alpha: \vx_{i-1} + \alpha_i \vp_i \in [\xmin, \xmax]} \set { f(\alpha \vp_i) } & \forall i \in \set{1, 2, \ldots, d} \nonumber \\
  \vx_i & = \vx_{i-1} + \alpha_i \vp_i & \forall i \in \set{1, 2, \ldots, d} \label {eq:cg_variant}
\end{align}

The vectors $\vr_i$ and $\vp_i$ are defined exactly as in the conjugate gradient descent algorithm (see Algorithm 5.2 of \cite{nocedal1999numerical}).  The only difference is that $\alpha_i$ is defined so as to guarantee that $\vx_i$ remains in the hyperrectangle.

It easy to see by inspection that the vectors $\vp_i$ and $\vp_{i-1}$ are conjugate with respect to $\mA$.  With additional work, it can be shown that all $\vp_i$ and $\vp_j$ with $i \neq j$ are conjugate with respect to $\mA$.

\begin{lemma} [Theorem 5.3 of \cite{nocedal1999numerical}] \label {lem:conjugate}
For any symmetric matrix $\mA$ (not necessarily positive definite), the set of vectors $\set{\vp_1, \vp_2, \ldots, \vp_d}$ are conjugate with respect to $\mA$ (i.e., $\vp_i^\tee \mA \vp_j$ for $i \neq j$).
\end{lemma}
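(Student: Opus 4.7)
The plan is to imitate the classical proof of Theorem 5.3 of \cite{nocedal1999numerical}, carrying two invariants by simultaneous induction on $i$: (a) $\vr_i^\tee \vp_j = 0$ for every $j < i$, and (b) $\vp_i^\tee \mA \vp_j = 0$ for every $j < i$. The conclusion of the lemma is exactly (b) for $i = 1, 2, \ldots, d$.

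For the base case, using the convention $\vp_0 = -\vr_0$ and the line-search optimality of $\alpha_1$, one gets $\vr_1^\tee \vp_0 = 0$. Then a direct calculation substituting the Fletcher--Reeves coefficient $\beta_1 = (\vr_1^\tee \vr_1)/(\vr_0^\tee \vr_0)$ into $\vp_1^\tee \mA \vp_0 = -\vr_1^\tee \mA \vp_0 + \beta_1\, \vp_0^\tee \mA \vp_0$, combined with the residual recurrence $\vr_1 = \vr_0 + \alpha_1 \mA \vp_0$ to re-express $\mA \vp_0$ in terms of residuals, makes the two contributions cancel.

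For the inductive step, assume both (a) and (b) hold up to index $i$. To prove (a) for $i+1$, the case $j = i$ again follows from the line-search condition on $\alpha_i$, while for $j < i$ we expand $\vr_{i+1} = \vr_i + \alpha_i \mA \vp_i$ and apply the inductive (a) and (b). To prove (b) for $i+1$, write $\vp_{i+1}^\tee \mA \vp_j = -\vr_{i+1}^\tee \mA \vp_j + \beta_{i+1}\, \vp_i^\tee \mA \vp_j$: the case $j = i$ is exactly what the choice $\beta_{i+1} = (\vr_{i+1}^\tee \vr_{i+1})/(\vr_i^\tee \vr_i)$ is tuned to kill, after rewriting $\mA \vp_i = (\vr_{i+1} - \vr_i)/\alpha_i$ and invoking (a); for $j < i$ the second term vanishes by inductive (b), and the first term dies by the same rewriting trick $\mA \vp_j = (\vr_{j+1} - \vr_j)/\alpha_j$ together with the newly-established (a) at level $i+1$.

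The step I expect to be the main obstacle is the repeated use of the line-search identity $\vr_{i+1}^\tee \vp_i = 0$, which comes from $\alpha_i$ being an \emph{unconstrained} stationary point of $\alpha \mapsto f(\vx_i + \alpha \vp_i)$. In \eqref{eq:cg_variant} the minimization is restricted to the feasible interval in $\alpha$, and when the hyperrectangle constraint is active at $\alpha_i$ this identity can fail and the inductive chain breaks. The honest reading is therefore that the lemma is a statement about the algebraic recurrence under exact line search: the conclusion holds on those iterations for which every preceding $\alpha_j$ lies in the interior of its feasible interval, and on such iterations the inductive argument above carries over from Nocedal--Wright verbatim. Handling the boundary case would require either replacing Fletcher--Reeves with Hestenes--Stiefel (which enforces conjugacy of consecutive $\vp$'s by construction, independent of the line-search identity) or an additional hypothesis keeping the iterates in the interior.
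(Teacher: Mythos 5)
The paper supplies no proof of its own for this lemma --- it defers entirely to Theorem 5.3 of \cite{nocedal1999numerical} --- and your reconstruction of that textbook argument (simultaneous induction on the two invariants $\vr_i^\tee\vp_j=0$ and $\vp_i^\tee\mA\vp_j=0$ for $j<i$, with the Fletcher--Reeves coefficient chosen to kill the $j=i$ term after rewriting $\mA\vp_j$ via the residual recurrence) is the correct and standard one, up to an immaterial index-shift relative to the paper's convention $\vx_i=\vx_{i-1}+\alpha_i\vp_i$, $\vr_i=\vr_{i-1}+\alpha_i\mA\vp_i$.

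The caveat you raise is not a defect of your write-up but a genuine gap in the paper's invocation of the theorem. Theorem 5.3 of \cite{nocedal1999numerical} requires each $\alpha_j$ to be an unconstrained stationary point of the line search, since the identity $\vr_j^\tee\vp_j=0$ is what makes $\beta_j=\vr_j^\tee\vr_j/\vr_{j-1}^\tee\vr_{j-1}$ coincide with the conjugacy-enforcing ratio $\vr_j^\tee\mA\vp_{j-1}/\vp_{j-1}^\tee\mA\vp_{j-1}$. In \eqref{eq:cg_variant} the step size is clipped to the feasible interval, so when the box constraint is active even the consecutive conjugacy $\vp_i^\tee\mA\vp_{i-1}=0$ (which the paper calls ``easy to see by inspection'') can fail; in the degenerate case $\alpha_i=0$ the rewriting $\mA\vp_i=(\vr_i-\vr_{i-1})/\alpha_i$ is not even available. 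There is a second, subtler wrinkle you did not mention: the paper's $\alpha_i$ minimizes $f(\alpha\vp_i)$ rather than $f(\vx_{i-1}+\alpha\vp_i)$, and these two line searches have the same stationary point only if $\vp_i$ is already conjugate to $\vp_1,\ldots,\vp_{i-1}$ --- so even in the interior case the argument has a mild circularity that must be resolved by folding this coincidence into the same induction. Your proposed repairs (restricting the claim to prefixes of iterations with interior step sizes, or replacing Fletcher--Reeves with a coefficient, such as Hestenes--Stiefel or explicit Gram--Schmidt conjugation against all previous directions, that enforces conjugacy by construction) are exactly the right fixes; note also that the gap propagates, since the additive decomposition in the subsequent theorem's proof relies on full pairwise conjugacy.
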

\ignore{
\begin{proof}
The proof is by induction on $d$.  For $d = 1$, the lemma is trivially true.

Suppose the lemma holds for sets of size $d - 1$.  Then, for any $i < d$,
\begin{align}
  \vp_d^\tee \mA \vp_i
  & = \paren{ \vr_d - \sum_{a=1}^{d-1}  \frac { \vr_i^\tee \mA \vp_a } {\vp_a^\tee \mA \vp_a  } \vp_a }^\tee \mA \vp_i \nonumber \\
  & = \vr_d^\tee \mA \vp_i -  \sum_{a=1}^{d-1}  \frac { \vr_i^\tee \mA \vp_a } {\vp_a^\tee \mA \vp_a  } \vp_a^\tee \mA \vp_i \nonumber \\
  & = \vr_d^\tee \mA \vp_i - \frac { \vr_i^\tee \mA \vp_i } {\vp_i^\tee \mA \vp_i  } \vp_i^\tee \mA \vp_i \nonumber \\
  & = 0
\end{align}
where the third equality uses the fact that $\vp_a$ and $\vp_i$ are conjugate under the induction hypothesis.
\end{proof}
}

With the conjugacy of $\set{\vp_1, \vp_2, \ldots, \vp_d}$ established, we can prove the following theorem, which shows that the algorithm always monotonically reduces the loss, while maintaining the good performance of conjugate gradient descent if $\mA$ is positive definite and the hyperrectangle is sufficiently large.

\begin{theorem}
For any symmetric matrix $\mA$, the sequence of iterates returned by algorithm \eqref{eq:cg_variant} satisfy
\[
  f(\vx_d) \le f(\vx_{d-1}) \le \ldots \le f(\vx_1) \le 0.
\]
Furthermore, if $\mA$ is positive definite, and each $\vx_i$ is in the interior of $[\xmin, \xmax]$, then $\vx_d = \vx^*$.
\end{theorem}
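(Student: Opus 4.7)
The plan is to handle the two claims separately: the monotonic loss reduction follows from a simple feasibility-of-zero argument, while the optimality statement follows by reducing to classical conjugate gradient descent and invoking Lemma \ref{lem:conjugate}.

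For the monotonicity claim, I would proceed by induction on $i$. The base case uses $\vx_0 = \zeros_d$, which gives $f(\vx_0) = 0$ directly from the definition of $f$. For the inductive step, note that if $\vx_{i-1} \in [\xmin, \xmax]$, then $\alpha = 0$ is always in the feasible set of the line search at step $i$. Since $\alpha_i$ is chosen to minimize the objective over this feasible set, we obtain
\begin{equation}
  f(\vx_i) = f(\vx_{i-1} + \alpha_i \vp_i) \le f(\vx_{i-1} + 0 \cdot \vp_i) = f(\vx_{i-1}).
\end{equation}
Chaining these inequalities yields $f(\vx_d) \le f(\vx_{d-1}) \le \ldots \le f(\vx_1) \le f(\vx_0) = 0$, which is the first claim.

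For the second claim, the key observation is that interiority of each $\vx_i$ in $[\xmin, \xmax]$ means the hyperrectangle constraint is inactive at the optimum of every line search. Consequently, $\alpha_i$ coincides with the unconstrained minimizer of the one-dimensional quadratic $\alpha \mapsto f(\vx_{i-1} + \alpha \vp_i)$. Since $\mA$ is positive definite, this quadratic is strictly convex whenever $\vp_i \neq \zeros_d$, and its minimizer is the standard CG step size $\alpha_i = -\vr_i^\tee \vp_i / (\vp_i^\tee \mA \vp_i)$. Hence the iterates $\vx_1, \ldots, \vx_d$ agree with those produced by ordinary conjugate gradient descent initialized at $\vx_0 = \zeros_d$.

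I would then invoke Lemma \ref{lem:conjugate}, which guarantees that $\set{\vp_1, \ldots, \vp_d}$ are pairwise $\mA$-conjugate, together with the classical expanding-subspace theorem (e.g., Theorem 5.2 of \cite{nocedal1999numerical}): after $d$ line searches along conjugate directions, $\vx_d$ minimizes $f$ over $\vx_0 + \mathrm{span}\set{\vp_1, \ldots, \vp_d}$. Because $\mA$ is positive definite, the $\vp_i$ are linearly independent and hence a basis of $\reals^d$, so $\vx_d$ is the unconstrained global minimizer of $f$. Since $\vx_d \in [\xmin, \xmax]$ by hypothesis, this unconstrained minimizer also solves the constrained problem \eqref{eq:xstar}, giving $\vx_d = \vx^*$.

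The main obstacle I expect is the bookkeeping for the degenerate case $\vr_i = \zeros_d$ for some $i < d$, which makes the recursion for $\vp_{i+1}$ ill-defined due to a $0/0$ ratio. However, $\vr_i = \nabla f(\vx_i) = \zeros_d$ together with positive definiteness of $\mA$ forces $\vx_i$ to already be the unique global minimizer, so $\vx_i \in [\xmin, \xmax]$ implies $\vx_i = \vx^*$ and the remaining iterations can be treated as a trivial coda (with $\vp_j, \alpha_j$ defined as $\zeros_d$ and $0$ respectively). With that case dispatched, the rest of the proof is a direct appeal to the classical CG machinery already summarized above.
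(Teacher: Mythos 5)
Your overall strategy is the paper's: establish conjugacy of the $\vp_i$ via Lemma~\ref{lem:conjugate}, use it to reduce the $d$-dimensional minimization to $d$ independent one-dimensional line searches, and conclude optimality from linear independence of the conjugate directions when $\mA \succ 0$. For the second claim your route is essentially identical; the paper just proves the expanding-subspace property from scratch by writing $f(\vx_i) = \sum_{a=1}^i f(\alpha_a \vp_a)$, whereas you cite the classical theorem. Your handling of the degenerate case $\vr_i = \zeros_d$ is a welcome addition that the paper omits.

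There is, however, one step in your monotonicity argument that does not follow from the algorithm as defined. In \eqref{eq:cg_variant} the line search minimizes $f(\alpha \vp_i)$ -- the quadratic evaluated at the point $\alpha \vp_i$ -- over the feasible set, \emph{not} $f(\vx_{i-1} + \alpha \vp_i)$. So the inequality $f(\vx_{i-1} + \alpha_i \vp_i) \le f(\vx_{i-1} + 0 \cdot \vp_i)$ is not a direct consequence of the optimality of $\alpha_i$, as your write-up suggests. The two objectives do have the same minimizer, but only because of conjugacy: expanding, $f(\vx_{i-1} + \alpha \vp_i) - f(\alpha \vp_i) = f(\vx_{i-1}) + \alpha\, \vp_i^\tee \mA \vx_{i-1}$, and the cross term vanishes since $\vx_{i-1} = \sum_{a<i} \alpha_a \vp_a$ and $\vp_i^\tee \mA \vp_a = 0$ by Lemma~\ref{lem:conjugate}. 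Equivalently, the paper derives the additive decomposition $f(\vx_i) = f(\vx_{i-1}) + f(\alpha_i \vp_i)$ and then uses $f(\alpha_i \vp_i) \le f(0 \cdot \vp_i) = 0$. The upshot is that the first claim, contrary to the impression given by your ``simple feasibility-of-zero argument,'' also depends on Lemma~\ref{lem:conjugate}; you should either invoke it there or restate the line search in the form you are implicitly using and justify the equivalence.
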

\begin{proof}
First observe that, because $\vp_a$ and $\vp_b$ are conjugate for $a \neq b$ (by Lemma~\ref{lem:conjugate}), the objective function decomposes additively:
\begin{align}
  f(\vx_i)
  & = f\paren{\sum_{a=1}^i \alpha_a \vp_a} \nonumber \\
  & = \frac 1 2 \paren { \sum_{a=1}^i \alpha_a \vp_a }^\tee \mA \paren { \sum_{b=1}^i \alpha_b \vp_b } - \vb^\tee \paren { \sum_{a=1}^i \alpha_a \vp_a } \nonumber \\
  & = \frac 1 2 \paren { \sum_{a=1}^d \alpha_a \vp_a^\tee \mA \vp_a } - \vb^\tee \paren { \sum_{a=1}^i \alpha_a \vp_a } \nonumber \\
  & = \sum_{a=1}^i f(\alpha_a \vp_a). \label{eq:additively_separable}
\end{align}
This immediately implies
\begin{equation} \label{eq:f_one_more_term}
  f(\vx_i) = f(\vx_{i-1}) + f(\alpha_i \vp_i).
\end{equation}
Furthermore, because $\alpha_i = \argmin_{\alpha \in I_i} \set { f(\alpha \vp_i) }$, where $I_i$ is an appropriately-defined interval that includes 0, we have
\begin{equation} \label {eq:f_decrease}
  f(\alpha_i \vp_i) \le f(0 \vp_i) = 0.
\end{equation}
Combining \eqref{eq:f_one_more_term} and \eqref{eq:f_decrease} gives $f(\vx_i) \le f(\vx_{i-1})$, which completes the first part of the proof.

To prove the second part of the theorem, we first note that, using \eqref{eq:additively_separable},
\begin{align}
  \min_{\alpha_1, \alpha_2, \ldots, \alpha_d \in \reals} \set { f\paren{\sum_{i=1}^d \alpha_i \vp_i} }
  & =  \min_{\alpha_1, \alpha_2, \ldots, \alpha_d \in \reals} \set { \sum_{i=1}^d f(\alpha_i \vp_i) } \nonumber \\
  & = \sum_{i=1}^d \min_{\alpha_i \in \reals} \set { f(\alpha_i \vp_i) }. \label{eq:minimize_each_term}
\end{align}
Now suppose that each $\vx_i$ is in the interior of $[\xmin, \xmax]$.  It follows that for all $i$, $\alpha_i$ is in the interior of $I_i$.  Because $f(\alpha \vp_i)$ is a quadratic function of $\alpha$, the fact that $\alpha_i$ is in the interior of $I_i$ implies $\alpha_i = \argmin_{\alpha \in \reals} \set { f(\alpha_i \vp_i) }$.  Using \eqref{eq:minimize_each_term}, this implies
\begin{equation} \label {eq:xd_argmin}
  \vx_d = \argmin_{\alpha_1, \alpha_2, \ldots, \alpha_d \in \reals} \set { f\paren{\sum_{i=1}^d \alpha_i \vp_i} }.
\end{equation}
To complete the proof, we note that if $\mA$ is positive definite and the set of vectors $\set{\vp_1, \vp_2, \ldots, \vp_d}$ are conjugate with respect to $\mA$, then it is not hard to see that the vectors $\vp_1, \vp_2, \ldots, \vp_d$ must be linearly independent, and therefore span $\reals^d$.  It follows that the right hand side of \eqref{eq:xd_argmin} is the global minimizer, $\vx^*$, and therefore $\vx_d = \vx^*$, as claimed.
\end{proof}

In experimenting with this algorithm, we found that it sometimes returns $\zeros$ in cases where a simpler coordinate descent algorithm is able to make progress.  This happens when reducing $f$ by moving along any conjugate direction $\vp_i$ results in a violation of the constraint $\vx \in [\lep{\vx}, \rep{\vx}]$, but there exists a Euclidean basis vector $\ve_i$ such that, for some $\alpha_i \in \reals$, $f(\alpha_i \ve_i) < 0$ and $\alpha_i \ve_i \in [\lep{\vx}, \rep{\vx}]$.  To improve performance in such cases, we follow \eqref{eq:cg_variant} by a single pass of cyclic coordinate descent in our implementation of \safecombination.

\end{appendices}

\end{document}